\DeclareMathOperator{\Vol}{Vol}
\DeclareMathOperator{\Out}{Out}
\DeclareMathOperator{\Isom}{Isom}
\providecommand{\abs}[1]{\lvert#1\rvert}
\providecommand{\norm}[1]{\lVert#1\rVert}
\newtheorem{theorem}{Theorem}
\newtheorem{prop}[theorem]{Proposition}
\newtheorem{corollary}[theorem]{Corollary}
\newtheorem{lemma}[theorem]{Lemma}
\newtheorem{assumption}[theorem]{Assumption}
\newtheorem{claim}[theorem]{Claim}
\newtheorem{definition}[theorem]{Definition}
\newtheorem{remark}[theorem]{Remark}
\title{Lower bounds for the volume of hyperbolic $n$-orbifolds}
\author{Ilesanmi Adeboye}
\date{}                                           
\begin{document}
\begin{abstract}
In this paper an explicit formula for a lower bound on the volume of a hyperbolic orbifold, dependent on dimension and the maximal order of torsion in the orbifolds' fundamental group, is constructed.
\end{abstract}
\bibliographystyle{plain}
\maketitle
\thispagestyle{empty}
\section{Introduction}A complete orientable \textit{hyperbolic n-orbifold} is an orbit space $\mathbb{H}^n/\Gamma$, where $\Gamma$ is a discrete group of orientation-preserving isometries of $\mathbb{H}^n$. An orientable hyperbolic $n$-manifold is the quotient of $\mathbb{H}^n$ by a discrete \textit{torsion-free} subgroup of $\Isom_{+} (\mathbb{H}^n)$. Explicit lower bounds for the volume of a  hyperbolic $3$-manifold, as well as for the volume of a hyperbolic $3$-orbifold, were given by Meyerhoff \cite{Mey1}. Later, explicit bounds for manifolds in all dimensions were constructed by Martin \cite{Mar1} and Friedland and Hersonsky \cite{FriHer}. Wang's finiteness theorem \cite{Wa} asserts that, for $n$ greater than three, the set of volumes of a hyperbolic $n$-orbifolds is discrete in the real numbers. Hence, lower bounds of the volume of hyperbolic $n$-orbifolds exist in all dimensions. In this paper we prove the following result.

\begin{theorem}[Main Theorem]
\label{MainTheorem}
Let $\Gamma$ be a discrete group of orientation-preserving isometries of $\mathbb{H}^n$. Assume that $\Gamma$ has no torsion element of order greater than $k$. Then
\[\Vol(\mathbb{H}^n/\Gamma) \geq \mathcal{A}(n,k)\]
where $\mathcal{A}(n,k)$ is an explicit constant depending only on $n$ and $k$. 
\end{theorem}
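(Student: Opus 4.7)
The plan is to reduce the volume bound to an injectivity-radius bound at a well-chosen basepoint. Fix $p \in \mathbb{H}^n$ whose stabilizer in $\mathrm{Isom}_{+}(\mathbb{H}^n)$ is a maximal compact $K$. If we can exhibit an explicit $r = r(n,k) > 0$ such that $d_{\mathbb{H}^n}(p, \gamma p) \geq 2r$ for every non-identity $\gamma \in \Gamma$, then the open ball of radius $r$ about $p$ injects into $\mathbb{H}^n/\Gamma$, and the theorem follows by taking $\mathcal{A}(n,k)$ to be the hyperbolic ball volume
\[
\mathcal{A}(n,k) \;=\; \Omega_{n-1} \int_0^{r(n,k)} \sinh^{n-1}(t)\,dt,
\]
where $\Omega_{n-1}$ is the area of the unit $(n-1)$-sphere. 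So the whole game is an explicit quantitative no-small-displacement estimate.

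To produce such an estimate I would identify $G := \mathrm{Isom}_{+}(\mathbb{H}^n)$ with $\mathrm{SO}^{+}(n,1)$, equip the Lie algebra $\mathfrak{g}$ with a fixed inner product (for instance, an appropriate positive multiple of the trace form on its Cartan decomposition), and transport this to a left-invariant Riemannian metric $\rho$ on $G$. Under the orbit map $g \mapsto gp$, the hyperbolic displacement $d_{\mathbb{H}^n}(p,\gamma p)$ is comparable to $d_\rho(e,\gamma K)$ with an explicit constant depending only on $n$. The next ingredient is a quantitative Zassenhaus--Kazhdan--Margulis neighborhood: via the Baker--Campbell--Hausdorff series one obtains an explicit $\delta_0 = \delta_0(n) > 0$ such that, for any discrete subgroup of $G$, any two of its elements inside the $\rho$-ball $B_{\delta_0}(e)$ generate a nilpotent (in fact essentially abelian) group. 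In particular, if $\gamma \in \Gamma$ lies in $B_{\delta_0}(e)$, then so do all of its conjugates by $\gamma$, and the iterated commutators $[\gamma,[\gamma,\cdots]]$ converge to $e$, which by discreteness must stabilize.

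The torsion hypothesis is used to rule out nontrivial $\gamma$ being arbitrarily close to $e$. Any finite-order element of $\mathrm{Isom}_{+}(\mathbb{H}^n)$ is elliptic, hence conjugate to an element of $K$. If $\gamma$ has order $m$ with $2 \leq m \leq k$, then as an element of $\mathrm{SO}(n)$ it has a nontrivial rotation angle which must be $\geq 2\pi/m \geq 2\pi/k$, yielding an explicit lower bound $\|\gamma - I\|_{\mathrm{op}} \geq 2\sin(\pi/k)$ and therefore $d_\rho(e,\gamma) \geq \eta(n,k)$. For $\gamma$ of infinite order lying in $B_{\delta_0}(e)$, the Zassenhaus step applied to powers $\gamma^j$ gives a contradiction with discreteness unless $\gamma = e$. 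Taking $r(n,k)$ to be a constant multiple (depending on the comparison between $d_\rho$ and hyperbolic displacement) of $\min(\delta_0(n), \eta(n,k))$ finishes the proof.

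The principal obstacle is controlling the constants explicitly. Effective versions of the Kazhdan--Margulis lemma require bounding the BCH tail in operator norm; carrying this out in $\mathrm{SO}^{+}(n,1)$ with dimension-dependent but tractable constants, while simultaneously tracking the Lie-theoretic normalization needed to pass between $\|\cdot\|_{\mathrm{op}}$, $d_\rho$, and $d_{\mathbb{H}^n}$, is the real technical content; the elliptic rotation-angle estimate and the ball-volume integral at the end are routine by comparison.
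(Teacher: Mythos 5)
Your reduction to an injectivity-radius bound is exactly the step that fails for orbifolds, and it is the reason the paper does not argue this way. You ask for an explicit $r(n,k)>0$ with $d_{\mathbb{H}^n}(p,\gamma p)\geq 2r$ for every non-identity $\gamma\in\Gamma$, so that $B(p,r)$ embeds in the quotient. But $\Gamma$ is allowed to contain elliptic elements, and an elliptic element fixes a nonempty totally geodesic subset of $\mathbb{H}^n$ pointwise; if $p$ lies on (or merely near) such a fixed-point set, then $d_{\mathbb{H}^n}(p,\gamma p)$ is zero (or arbitrarily small) for a nontrivial $\gamma$. Your own chain of estimates exposes the problem: you correctly note that displacement of $p$ corresponds to $d_\rho(e,\gamma K)$, the distance to the coset $\gamma K$, yet the lower bound you derive from the rotation-angle estimate is a bound on $\|\gamma-I\|$, i.e.\ on $d_\rho(e,\gamma)$ itself. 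For $\gamma\in K$ (an elliptic element stabilizing $p$) one has $d_\rho(e,\gamma K)=0$ no matter how large $\|\gamma-I\|$ is, so no choice of $r(n,k)$ built from $\min(\delta_0(n),\eta(n,k))$ can make the ball inject. There is no uniform way to choose $p$ at definite distance from the singular locus either, and you offer no argument for one.

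The paper's proof accepts that the ball cannot embed and instead bounds the \emph{multiplicity} of the covering on a ball: after conjugating so that $\|A-I_{n+1}\|\geq c_k$ for all non-identity $A$ (Proposition~\ref{origthm2}, proved via the generalized J{\o}rgensen inequality rather than a BCH/Zassenhaus computation), it shows that any $A$ with $A(\overline{B(e_1,r)})\cap\overline{B(e_1,r)}\neq\emptyset$ has matrix entries bounded by $\kappa(r)$, that distinct such elements must differ by a definite amount in some entry, and hence by a box-counting argument that at most $\left(2\kappa(r)^2(n+1)^2/c_k+1\right)^{(n+1)^2}$ elements fail to move the ball off itself. The volume bound is then $\Vol(B(e_1,r))/|\mathcal{H}|$, not $\Vol(B(e_1,r))$. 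Your outline is essentially the manifold (torsion-free) argument of Martin and Friedland--Hersonsky; to repair it you would need to replace the embedded-ball step with a count of the elements that return the ball to itself, which is the real content of the theorem.
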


More precisely, \[\mathcal{A}(n,k)=\sup_{r>0}\left(1+\left(\frac{e(n+1)(1+\cosh r)}{\sinh r}\right)^{2}\cosh 6r\sin^{-2}\left(\frac{\pi}{k}\right)\right)^{-(n+1)^2}       
\int_{0}^{r}n\frac{\pi^{\frac{n}{2}}}{(n/2)!}\sinh^{n-1}(u)du. \]

As a corollary, we obtain the following analogue of Hurwitz's formula for groups acting on surfaces.

\begin{corollary} \label{corollary 1} If $M$ is an orientable hyperbolic $n$-manifold and $G$ is a group of orientation preserving isometries of $M$ containing no torsion elements of order greater than $k$, then

\[|G| \leq \frac{\Vol(M)}{\mathcal{A}(n,k)}.\qed
\] \end{corollary}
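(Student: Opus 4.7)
The plan is to pass from $G$ acting on $M$ to a discrete group $\Gamma'$ of isometries of $\mathbb{H}^n$, and then apply the Main Theorem directly. Write $M=\mathbb{H}^n/\Gamma$ where $\Gamma\subset\Isom_+(\mathbb{H}^n)$ is discrete and torsion-free. The $G$-action on $M$ lifts, via covering space theory applied to the universal cover $\mathbb{H}^n\to M$, to a discrete group $\Gamma'\subset\Isom_+(\mathbb{H}^n)$ containing $\Gamma$ as a normal subgroup with $\Gamma'/\Gamma\cong G$. In particular $[\Gamma':\Gamma]=|G|<\infty$, so the quotient map gives $\Vol(\mathbb{H}^n/\Gamma')=\Vol(M)/|G|$.

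Next I would verify the torsion hypothesis for $\Gamma'$. Suppose $\gamma'\in\Gamma'$ has finite order $m$, and let $d$ be the order of its image $\bar\gamma'$ in $G$. Then $d\mid m$ and $\gamma'^d\in\Gamma$; since $\Gamma$ is torsion-free and $\gamma'^d$ has finite order, $\gamma'^d=1$, so $m\mid d$, giving $m=d$. Hence every torsion element of $\Gamma'$ has the same order as its image in $G$, and the hypothesis on $G$ forces $\Gamma'$ to have no torsion element of order greater than $k$.

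Finally, applying the Main Theorem to $\Gamma'$ yields
\[
\frac{\Vol(M)}{|G|}=\Vol(\mathbb{H}^n/\Gamma')\geq\mathcal{A}(n,k),
\]
and rearranging gives the claimed bound.

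The steps are essentially formal consequences of the Main Theorem; the only nontrivial point is the lift from isometries of $M$ to isometries of $\mathbb{H}^n$ with the correct torsion behavior, and the order-preservation argument above handles it cleanly, so no real obstacle remains.
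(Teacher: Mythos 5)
Your proposal is correct and follows essentially the same route as the paper: both pass to the quotient orbifold $M/G=\mathbb{H}^n/\Gamma'$, use $\Vol(\mathbb{H}^n/\Gamma')=\Vol(M)/|G|$, and apply the Main Theorem to $\Gamma'$. Your explicit verification that torsion elements of $\Gamma'$ have the same order as their images in $G$ (using torsion-freeness of $\Gamma$) is a detail the paper leaves implicit, and you carry it out correctly.
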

I would like to thank Professors Francis Bonahon and Mario Bonk for many helpful discussions. I am very grateful to Professor Dick Canary for reading several versions of this paper, as well as for his encouragement, support and mentoring over the years.

\section{Preliminaries}
\label{123}
We denote \textit{hyperbolic $n$-space} by $\mathbb{H}^n$ and define it as \[\mathbb{H}^n=\left\{(x_1,\dotsc,x_{n+1}) \in \mathbb{R}^{n+1}: -x_1^2+x_2^2+\dotsb +x_{n+1}^2=-1,x_1>0\right\}\] together with the Riemannian metric induced on $\mathbb{H}^n$ by the quadratic form $ds^2=-dx_1^2+dx_2^2\dotsb+dx_{n+1}^2.$
The Riemannian metric gives rise to a distance function. Given two vectors $x,y \in \mathbb{H}^n$ the \textit{hyperbolic distance} between $x$ and $y$ is denoted by $d_{\mathbb{H}}(x,y)$ and defined by the equation 
\[\cosh d_{\mathbb{H}}(x,y) = x_1y_1-\dotsb-x_{n+1}y_{n+1}.\]Let $e_i$ denote the standard basis element. We will make particular use of $e_1=(1,0,\dotsc,0)$ which is an element of $\mathbb{H}^n.$ The group of isometries of hyperbolic space will be identified with the Lie group $O^{+}(1,n)$ \cite{Bea}. The subgroup $SO^{+}(1,n)$, consisting of all elements of $O^{+}(1,n)$ with determinant $1$, corresponds to orientation-preserving isometries of $\mathbb{H}^n$. The symbol $I_n$ will denote the $n\times n$ identity matrix. The \textit{torsion} elements of a discrete group of isometries of hyperbolic space, that is isometries of finite order, are called \textit{elliptic}. We will use the two terms interchangeably. 

For an element $A$ of $O^{+}(1,n)$ we define its operator norm to be \[\norm{A}=\max\left\{\abs{Av}:v\in \mathbb{R}^{n+1} \text{ and } \abs{v}=1\right\}.\] There is an important alternative definition for the operator norm. Let $A$ be any $n\times n$ matrix. The \textit{spectrum}, denoted by $\sigma(A)$, is the set of all eigenvalues of $A$. The \textit{spectral radius}, denoted by $r_{\sigma}(A)$, is defined by the equation \[r_{\sigma}(A)=\max_{\lambda\in\sigma(A)}\abs{\lambda}.\]It is proved in section 7.3 of \cite{Atk} that \[\norm{A}=\sqrt{r_{\sigma}(A^{t}A)}.\]

The \textit{conformal ball model} of hyperbolic $n$-space consists of $\mathbb{B}^n$, the open unit ball in $\mathbb{R}^n$, together with the metric \[ds^{2}_{\mathbb{B}}=\frac{4(dx^2_1+\cdots+dx^2_n)}{(1-\abs{x}^2)^2}.\]
\bigskip

\noindent \textbf{Outline.} The proof of the Main Theorem will come in three steps. In section~\ref{OrigThm1} we prove that an upper bound on the order of an elliptic isometry $A$ of $\mathbb{H}^n$ gives a lower bound on the operator norm of $A-I_{n+1}$. In section~\ref{OrigThm2} we show that, up to conjugation, an upper bound on the maximal order of torsion of a discrete group of isometries of $\mathbb{H}^n$ leads to a uniform lower bound on $\|A-I_{n+1}\|$ for all $A\neq I_{n+1}$. Finally, in section~\ref{OrigThm3}, we establish an upper bound on the number of elements of a discrete group of isometries of $\mathbb{H}^n$ that fail to move a ball of radius $r$ of itself. This allows us to bound the volume of the image of such a ball in the orbit space.

\section{Norm Bound for Low-Order Torsion Elements}
\label{OrigThm1}

In this section we prove the following proposition:

\begin{prop}
\label{origthm1}
Let $A\in O^{+}(1,n)$ be an elliptic element of order at most $k$. Then \[\|A-I_{n+1}\| \geq c_{k}\] where $c_k :=2\sin^2\left(\frac{\pi}{k}\right)e^{-2}.$
\end{prop}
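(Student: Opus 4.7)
My approach rests on two observations. First, every elliptic element of $O^{+}(1,n)$ fixes a point of $\mathbb{H}^{n}$, so it is conjugate \emph{inside} $O^{+}(1,n)$ to a block matrix of the form $\mathrm{diag}(1,R)$ with $R\in O(n)$ of the same order as $A$. Second, the operator norm of any matrix dominates its spectral radius, a fact that follows immediately from the identity $\|B\|=\sqrt{r_{\sigma}(B^{t}B)}$ recalled in the preliminaries (apply it with $B^{t}B$ normal to see $\|B\|^{2}\ge r_{\sigma}(B)^{2}$). Together with the elementary observation that an orthogonal matrix of order exactly $m$ must have a primitive $m$-th root of unity among its eigenvalues, these inputs reduce the proposition to estimating $|\zeta-1|$ for $\zeta$ a suitable root of unity.

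\textbf{Main steps.} Let $m$ denote the order of $A$, so $2\le m\le k$. First I would invoke Cartan's fixed-point theorem: the cyclic group $\langle A\rangle$ is finite, hence has a fixed point $p\in\mathbb{H}^{n}$; by the transitive action of $O^{+}(1,n)$ on $\mathbb{H}^{n}$ there exists $g\in O^{+}(1,n)$ with $gp=e_{1}$, and then $gAg^{-1}$ stabilises $e_{1}$, placing it in the compact block-diagonal stabiliser $\{1\}\times O(n)$. So $gAg^{-1}=\mathrm{diag}(1,R)$ for some $R\in O(n)$ of order $m$; conjugation preserves both order and spectrum, so $\sigma(A)=\{1\}\cup\sigma(R)$. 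Next, because the minimal polynomial of $R$ is divisible by the $m$-th cyclotomic polynomial $\Phi_{m}$, the spectrum of $R$ contains a primitive $m$-th root of unity $\zeta=e^{2\pi i j/m}$ with $\gcd(j,m)=1$. Applying the spectral-radius bound to $A-I_{n+1}$, whose spectrum is $\{\lambda-1:\lambda\in\sigma(A)\}$, gives
\[
\|A-I_{n+1}\|\;\ge\;r_{\sigma}(A-I_{n+1})\;\ge\;|\zeta-1|\;=\;2\bigl|\sin(\pi j/m)\bigr|\;\ge\;2\sin(\pi/m).
\]
Since $x\mapsto\sin(\pi/x)$ is decreasing for $x\ge 2$, the inequality $m\le k$ forces $2\sin(\pi/m)\ge 2\sin(\pi/k)$, and the trivial estimate $\sin(\pi/k)\le 1\le e^{2}$ yields $2\sin(\pi/k)\ge 2\sin^{2}(\pi/k)e^{-2}=c_{k}$.

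\textbf{Main obstacle.} The only nontrivial input is the conjugation step, namely the assertion that an elliptic element of $O^{+}(1,n)$ may be conjugated \emph{inside the same group} to a matrix stabilising $e_{1}$; this uses Cartan's fixed-point theorem together with transitivity, and one must be slightly careful that the conjugating element lies in $O^{+}(1,n)$ rather than merely in $GL_{n+1}(\mathbb{R})$. Once that is in place the rest is purely linear-algebraic. Note that the final chain of inequalities leaves considerable slack --- the argument actually produces the stronger bound $2\sin(\pi/k)$ --- which suggests that the author may prefer a different, geometrically motivated route (perhaps case-splitting on the distance from $e_{1}$ to the fixed set of $A$ and paying a factor $e^{-2t}$ for a hyperbolic conjugation of length $t\le 1$) that yields exactly the stated form of $c_{k}$ but fits more naturally into the subsequent arguments of Sections \ref{OrigThm2} and \ref{OrigThm3}.
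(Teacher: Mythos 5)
Your route is genuinely different from the paper's and, once one misstep is repaired, it is correct --- indeed it proves the stronger bound $\|A-I_{n+1}\|\ge 2\sin(\pi/k)$. The paper never exploits conjugation-invariance of the spectrum. It first proves the bound $2\sin(\pi/k)$ only for elements of the stabilizer $E(n)$ of $e_1$ (via the real canonical form of an orthogonal matrix and the test vector $e_2$), and then handles a general elliptic $A$ by a case split on the hyperbolic distance $\delta$ from $e_1$ to the fixed-point set of $A$: for small $\delta$ it conjugates $A$ into $E(n)$ by an isometry $B$ with $\|B\|=e^{\delta}$ and pays the factor $\|B\|^{2}=e^{2\delta}$ (the operator norm is not conjugation-invariant under non-orthogonal conjugation); for large $\delta$ it bounds $\|A-I_{n+1}\|$ below by $\cosh d_{\mathbb{H}}(e_1,Ae_1)-1=2\sinh^{2}\delta\,\sin^{2}(\theta/2)$ via the hyperbolic law of cosines. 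Taking the maximum of the two bounds, the infimum over $\delta$, and splitting at $\delta=1$ produces exactly $c_k=2\sin^{2}(\pi/k)e^{-2}$. Your observation that the spectrum, unlike the norm, is unchanged by the conjugation moving the fixed point to $e_1$ short-circuits all of this, which is why you land on the cleaner constant; the inequality $\|B\|\ge r_{\sigma}(B)$ does hold for complex eigenvalues of a real matrix, so that step is sound. (A caveat about downstream use only: Remark~\ref{tauc} needs the constant to be smaller than $\tau\approx 0.2971$, which fails for $2\sin(\pi/k)$ when $k$ is small, so the paper's weaker $c_k$ could not simply be replaced throughout Section~\ref{OrigThm2}.)

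One sub-claim in your write-up is false as stated: the minimal polynomial of an orthogonal matrix $R$ of order $m$ need not be divisible by $\Phi_{m}$, so $\sigma(R)$ need not contain a primitive $m$-th root of unity. For instance, the block-diagonal $R\in O(3)$ with a $-1$ block and a rotation by $2\pi/3$ has order $6$, but its eigenvalues are $-1$ and $e^{\pm 2\pi i/3}$, of orders $2$ and $3$; the order of $R$ is only the least common multiple of the $d$ with $\Phi_{d}$ dividing the minimal polynomial, which forces $d=m$ to occur only when $m$ is a prime power. Fortunately you do not need primitivity: since $R^{m}=I$ and $R$ is diagonalizable, every eigenvalue is an $m$-th root of unity, and since $R\neq I_{n}$ there is an eigenvalue $\lambda=e^{2\pi i j/m}\neq 1$ with $1\le j\le m-1$; then $|\lambda-1|=2\sin(\pi j/m)\ge 2\sin(\pi/m)$ already, because $\sin$ attains its minimum on $[\pi/m,\pi(m-1)/m]$ at the endpoints. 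With that replacement the remainder of your chain of inequalities goes through verbatim.
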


The first step is to prove a version of Proposition~\ref{origthm1} for the elements of the subgroup of $O^{+}(1,n)$ that fix $e_{1}$. Next, we will consider the remaining elliptic  elements, which are all conjugate to elements which fix $e_{1}$. Two different bounds on $\|A-I_{n+1}\|$ that depend on the distance between the fixed point set of $A$ and $e_{1}$ will be developed. Finally, all results will be combined to prove the proposition.


Define $E(n)$  to be the subgroup of $O^{+}(1,n)$ that stabilizes the vector $e_{1}$. We identify $E(n)$ with the \textit{orthogonal group} $O(n)$ by noting that for each $A \in E(n)$, there exists $A^{*} \in O(n)$  such that
\[ A = \begin{pmatrix}
1&  \\
 &A^{*}
\end{pmatrix}\]

Using this identification we may carry over properties of $O(n)$ to $E(n)$. In particular
\begin{equation}
\label{in-tr}
A \in E(n) \Longrightarrow A^{-1} = A^{t}.
\end{equation}
A basic result of linear algebra, the proof of which, for instance, can be found in section 6.4 of \cite{LinGeo}, gives us the following lemma.

\begin{lemma} 
\label{basic}
Given $A \in E(n)$ there exists $B \in E(n)$ such that    
\begin{equation}
\label{stamx}
BAB^{-1} = \begin{pmatrix}   
1 &  &  &  &  &  &  &  \\ &    A_{1} &  &  &  &  &  &  \\ &  &    . &  &  &  &  &  \\ &  &  &    . &  &  &  &  \\ &  &  &  &    . &  &  &  \\ &  &  &  &  &    A_{l} &  &  \\ &  &  &  &  &  &    -I_{s} &  \\ &  &  &  &  &  &  &    I_{t}\end{pmatrix}
\end{equation}
where l, s, t are non-negative integers and
\begin{equation*}
A_i = \begin{pmatrix}\cos \theta_{i} & -\sin \theta_{i} \\\sin \theta_{i} & \cos \theta_{i}   \end{pmatrix}
\end{equation*} 
 where $0 < \theta_{i}  < \pi$.$\qed$ \end{lemma}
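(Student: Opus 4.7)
The plan is to transport the problem to $O(n)$ via the identification $A = \mathrm{diag}(1, A^{*})$ and then appeal to the standard block-diagonalization of orthogonal matrices. Any orthogonal conjugator $B^{*} \in O(n)$ lifts to $B = \mathrm{diag}(1, B^{*}) \in E(n)$, and conjugation by $B$ has no effect on the leading $1 \times 1$ block. Thus it suffices to produce an orthonormal basis of $\mathbb{R}^{n}$ in which $A^{*}$ takes the claimed block form with the initial $1 \times 1$ block absorbed.

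I would proceed by induction on $n$, using the following key observation: because $A^{*}$ is orthogonal, $\mathbb{R}^{n}$ admits an $A^{*}$-invariant subspace $W$ of dimension $1$ or $2$. Indeed, real eigenvalues of $A^{*}$ have modulus one and so equal $\pm 1$, yielding one-dimensional invariant subspaces; each non-real eigenvalue comes in a conjugate pair $e^{\pm i \theta}$ with $0 < \theta < \pi$, and the real span of the real and imaginary parts of a corresponding complex eigenvector is a two-dimensional $A^{*}$-invariant subspace. Since $A^{*}$ preserves the Euclidean inner product, $W^{\perp}$ is also invariant and $A^{*}|_{W^{\perp}} \in O(W^{\perp}) \cong O(n - \dim W)$, so the induction hypothesis applies to the restriction and assembles a complete orthogonal decomposition of $\mathbb{R}^{n}$ into $A^{*}$-invariant summands of dimension $1$ or $2$.

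On each one-dimensional summand $A^{*}$ acts as $\pm 1$, contributing to the $-I_{s}$ or $I_{t}$ block. On each two-dimensional summand $A^{*}$ restricts to an element of $O(2)$ whose eigenvalues are a non-real conjugate pair $e^{\pm i \theta_{i}}$; the determinant is then $|e^{i\theta_{i}}|^{2} = 1$, so the restriction is a rotation by $\theta_{i}$. Concatenating suitable orthonormal bases—two-dimensional rotation summands first, followed by the $(-1)$-eigenspace and then the $(+1)$-eigenspace—produces the desired $B^{*} \in O(n)$ and yields the block form displayed in (\ref{stamx}). The one genuinely delicate point, and the main bookkeeping obstacle, is fixing the sign of $\sin \theta_{i}$ so that the rotation block appears precisely as $A_{i}$: this is handled by, if necessary, swapping the ordered pair of basis vectors of a two-dimensional summand, which reverses orientation and flips the sign of $\sin \theta_{i}$ while keeping the basis orthonormal and $B^{*}$ in $O(n)$.
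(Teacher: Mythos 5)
Your proof is correct: it is the standard inductive argument for the real canonical form of an orthogonal matrix (invariant subspace of dimension $1$ or $2$, orthogonal complement invariant, non-real eigenvalue pairs forcing rotation blocks, with the basis-swap trick to normalize $0<\theta_i<\pi$). The paper itself gives no proof of this lemma---it simply cites section 6.4 of a linear algebra text---so there is nothing to compare against; your argument supplies a complete and correct justification for what the paper treats as a black box.
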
 
 
 \begin{remark}
 \label{thetabound}
If in Lemma~\ref{basic} the order of $A$ is at most $k$, then clearly
\[\frac{2\pi}{k} \leq \theta_{i} < \pi.\]
\end{remark}

Since the set of eigenvalues of a matrix is conjugacy invariant, so is the operator norm.
\begin{lemma}
\label{similar}
 Let $A$ be a real $(n+1) \times (n+1)$ matrix and let $B \in E(n)$, then \[\norm{BAB^{-1}} = \norm{A}.\qed\] \end{lemma}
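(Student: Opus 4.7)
The plan is to use the alternative characterization of the operator norm recalled in Section~\ref{123}, namely $\norm{M}=\sqrt{r_{\sigma}(M^{t}M)}$, and show that $(BAB^{-1})^{t}(BAB^{-1})$ is similar to $A^{t}A$, so that the two matrices share the same spectrum and hence the same spectral radius.

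Concretely, I would first expand
\[
(BAB^{-1})^{t}(BAB^{-1}) = (B^{-1})^{t}A^{t}B^{t}BAB^{-1}.
\]
The hypothesis $B\in E(n)$ gives $B^{-1}=B^{t}$ by \eqref{in-tr}, so $B^{t}B=B^{-1}B=I_{n+1}$ and $(B^{-1})^{t}=(B^{t})^{t}=B$. Substituting these identities collapses the expression to
\[
(BAB^{-1})^{t}(BAB^{-1}) = B\,(A^{t}A)\,B^{-1}.
\]

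At this stage the conjugacy invariance of the spectrum does the rest: $B(A^{t}A)B^{-1}$ and $A^{t}A$ have the same eigenvalues, so $r_{\sigma}(B(A^{t}A)B^{-1})=r_{\sigma}(A^{t}A)$. Taking square roots and applying the norm formula from Section~\ref{123} yields $\norm{BAB^{-1}}=\norm{A}$.

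There is no real obstacle here; the only thing to be careful about is that $B$ is a matrix in $O^{+}(1,n)$, not an arbitrary orthogonal matrix, so the identity $B^{-1}=B^{t}$ cannot be invoked directly from orthogonality of $B$ as an element of $O^{+}(1,n)$ (that group is defined via the Lorentzian form, not the Euclidean one). The point is precisely that for the stabilizer $E(n)$ of $e_{1}$ the block form identifies $B$ with an element of the Euclidean orthogonal group $O(n)$, which is exactly the content of \eqref{in-tr}.
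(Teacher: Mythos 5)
Your proof is correct and follows essentially the same route the paper intends: the paper offers only the one-line remark that conjugacy invariance of eigenvalues implies conjugacy invariance of the operator norm, and your argument is the careful version of exactly that, via $\norm{M}=\sqrt{r_{\sigma}(M^{t}M)}$. In fact your write-up is slightly more complete than the paper's, since it makes explicit that the orthogonality $B^{t}=B^{-1}$ (not mere conjugacy) is what reduces $(BAB^{-1})^{t}(BAB^{-1})$ to a conjugate of $A^{t}A$.
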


The following proposition gives our result for elements of $E(n)$.

\begin{prop} 
\label{E(n)result}
If $A$ is a non-identity element of $E(n)$ of order at most $k$, then 

\[\|A-I_{n+1}\| \geq 2\sin\left(\frac{\pi}{k}\right).\]

\end{prop}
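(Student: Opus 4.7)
The plan is to reduce to the canonical block form provided by Lemma~\ref{basic} and then compute the operator norm directly. Since Lemma~\ref{similar} tells us that conjugation by any element of $E(n)$ preserves the operator norm, and since $B I_{n+1} B^{-1} = I_{n+1}$, we have $\|A - I_{n+1}\| = \|BAB^{-1} - I_{n+1}\|$ for the specific $B \in E(n)$ produced by Lemma~\ref{basic}. So without loss of generality $A$ is already in the block-diagonal form displayed in (\ref{stamx}).

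With $A$ in that form, $A - I_{n+1}$ is block-diagonal with blocks of three types: the $(1,1)$ entry contributes a zero block; each rotation block contributes $A_i - I_2$; the $-I_s$ block contributes $-2I_s$; and the $I_t$ block contributes zero. The operator norm of a block-diagonal matrix equals the maximum of the operator norms of its blocks (this follows immediately from $\|A\|=\sqrt{r_\sigma(A^t A)}$ stated in the preliminaries, since $(A-I_{n+1})^t(A-I_{n+1})$ is block-diagonal with the corresponding summand blocks). So I need only bound each block from below.

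For a rotation block, a direct calculation shows
\[
(A_i - I_2)^t(A_i - I_2) = (2 - 2\cos\theta_i)\, I_2 = 4\sin^2\!\left(\tfrac{\theta_i}{2}\right) I_2,
\]
so $\|A_i - I_2\| = 2\sin(\theta_i/2)$. By Remark~\ref{thetabound} we have $\pi/k \leq \theta_i/2 < \pi/2$, and since $\sin$ is increasing on $[0,\pi/2]$ this yields $\|A_i - I_2\| \geq 2\sin(\pi/k)$. For a $-2I_s$ block (if $s>0$) the norm is $2 \geq 2\sin(\pi/k)$, which dominates because the presence of any nontrivial block forces $k\geq 2$. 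Since $A \neq I_{n+1}$, at least one rotation block is present or $s>0$, so taking the maximum over blocks gives the desired bound.

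There is no real obstacle here beyond executing the $2\times 2$ computation and checking that $\sin$ is monotone on the relevant interval; the reduction via Lemma~\ref{basic} and Lemma~\ref{similar} does essentially all of the work.
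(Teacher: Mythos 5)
Your proof is correct and follows essentially the same route as the paper: reduce to the canonical form of Lemma~\ref{basic} via the conjugation-invariance of Lemma~\ref{similar}, then extract the bound $2\sin(\theta_i/2)\ge 2\sin(\pi/k)$ from a rotation block (or the value $2$ from the $-I_s$ block). The only cosmetic difference is that you compute the norm of each block exactly via the spectral-radius formula, whereas the paper simply tests $A'-I_{n+1}$ against the unit vector $e_2$; both yield the same estimate.
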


\begin{proof} Write $A = BA^{\prime}B^{-1}$, where $A^{\prime}$ has the form of the right-hand side of equation~\ref{stamx} and $B$ is the appropriate element of $E(n)$. Then by Lemma~\ref{similar}
\begin{equation*}
\begin{split}
\norm{A-I_{n+1}}&= \norm{BA^{\prime}B^{-1} - I_{n+1}}\\
&= \norm{B(A^{\prime} - I_{n+1})B^{-1}}\\ 
&= \norm{A^{\prime} - I_{n+1}}\,\,\,\text{(by Lemma~\ref{similar}})\\ 
&\geq \abs{(A^{\prime} - I_{n+1})e_{2}}\\
&= \abs{A^{\prime}e_{2} - e_{2}}.
\end{split}
\end{equation*}

Assume $l \neq 0$, then
\[\abs{A^{\prime}e_{2} - e_{2}}= \sqrt{(\cos\theta_{1} - 1)^2 + \sin^{2}\theta_{1}} = 2\sin\left(\frac{\theta_{1}}{2}\right)\geq 2\sin\left(\frac{\pi}{k}\right)\]
by Remark~\ref{thetabound}. If $l = 0$, then $ k = 2$ and $A^{\prime}$ has the form
\[\begin{pmatrix} 1 &  &  \\ &    -I_{s} &  \\ &  &    I_{t}\end{pmatrix},\] therefore
\[\abs{A^{\prime}e_{2} - e_{2}}= \abs{- e_{2} - e_{2}}= 2 = 2\sin\left(\frac{\pi}{2}\right).\] \end{proof}


Now consider the more general case where $A$ is an elliptic element of $O^{+}(1,n)$ which does not fix $e_{1}$. Our first approach will give us a bound in the case where the fixed point set of $A$ is  ``close" to $e_{1}$.  We will show that $A$ is conjugate, by an isometry whose norm we can explicitly calculate, to an elliptic element of the same order which fixes $e_{1}$. Proposition~\ref{E(n)result} can then be used to obtain a bound on $\|A -I_{n+1}\|$. 

\begin{prop} 
\label{conE(n)result1}
Let $A\in O^{+}(1,n)$ be an elliptic element of order at most $k$, which does not fix $e_{1}$. Let $\delta$ be the hyperbolic distance from $e_{1}$ to the fixed point set  of $A$. Then
\[\norm{A -I_{n+1}} \geq2\sin\left(\frac{\pi}{k}\right)e^{-2\delta}.\]
\end{prop}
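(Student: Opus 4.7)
The plan is to conjugate $A$ to an element that fixes $e_1$, apply Proposition~\ref{E(n)result} there, and then track how the conjugating element distorts the operator norm. Concretely, I would let $p$ be a point in the fixed point set of $A$ at hyperbolic distance $\delta$ from $e_1$, and produce an isometry $T \in O^{+}(1,n)$ with $T(p) = e_1$. Then $TAT^{-1}$ fixes $e_1$, so it lies in $E(n)$ and has the same order as $A$ (conjugation preserves order), so Proposition~\ref{E(n)result} gives
\[
\norm{TAT^{-1} - I_{n+1}} \geq 2\sin\!\left(\frac{\pi}{k}\right).
\]
Combined with the submultiplicativity of the operator norm, this produces
\[
2\sin\!\left(\frac{\pi}{k}\right) \leq \norm{T(A-I_{n+1})T^{-1}} \leq \norm{T}\cdot \norm{A-I_{n+1}} \cdot \norm{T^{-1}},
\]
so the proposition reduces to showing that $T$ can be chosen with $\norm{T} = \norm{T^{-1}} = e^{\delta}$.

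For the construction of $T$, I would first pick $R \in E(n)$ carrying $p$ to a point on the geodesic $\{\cosh t\cdot e_1 + \sinh t\cdot e_2 : t \in \mathbb{R}\}$; since $R$ fixes $e_1$ and is an isometry, it sends $p$ to $\cosh\delta \cdot e_1 + \sinh\delta\cdot e_2$. Then I would compose with the standard hyperbolic translation
\[
T_0 = \begin{pmatrix} \cosh\delta & \sinh\delta & \\ \sinh\delta & \cosh\delta & \\ & & I_{n-1}\end{pmatrix}
\]
along that geodesic, setting $T = T_0^{-1} R$, so that $T(p) = T_0^{-1}(\cosh\delta\cdot e_1 + \sinh\delta\cdot e_2) = e_1$ as desired.

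The remaining step is the norm computation, which is the only place any real calculation enters. Since $R$ is orthogonal on $\mathbb{R}^{n+1}$ we have $\norm{R} = 1$, so $\norm{T} \leq \norm{T_0^{-1}}$ and $\norm{T^{-1}} \leq \norm{T_0}$. Using the formula $\norm{T_0} = \sqrt{r_\sigma(T_0^t T_0)}$ from Section~\ref{123}, the relevant $2 \times 2$ block of $T_0^t T_0$ has entries $\cosh 2\delta$ on the diagonal and $\sinh 2\delta$ off-diagonal, with eigenvalues $e^{\pm 2\delta}$, so $\norm{T_0} = e^{\delta}$; the same computation applied to $T_0^{-1}$ (which is of the same shape with $\delta$ replaced by $-\delta$) yields $\norm{T_0^{-1}} = e^{\delta}$. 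Plugging $\norm{T}\cdot \norm{T^{-1}} \leq e^{2\delta}$ into the displayed inequality above gives the claimed bound.

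The only mild obstacle is making sure the choice of $T$ really sends $p$ to $e_1$ while having its norm controlled by the norm of a single hyperbolic translation; the factorization $T = T_0^{-1}R$ handles this cleanly because elements of $E(n)$ have Euclidean operator norm one, so the expansion is entirely due to $T_0$. Everything else is an invocation of Proposition~\ref{E(n)result} and submultiplicativity of the operator norm.
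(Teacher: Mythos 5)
Your proposal is correct and follows essentially the same route as the paper: both conjugate $A$ into $E(n)$ by the composition of an element of $E(n)$ with the standard hyperbolic translation by $\delta$ along the $e_1$--$e_2$ geodesic, invoke Proposition~\ref{E(n)result}, and use the computation that this translation has operator norm $e^{\delta}$. Your variant of bounding $\norm{T}$ and $\norm{T^{-1}}$ separately via $\norm{R}=1$ and submultiplicativity, rather than computing $B^{t}B$ directly as the paper does, is an immaterial difference.
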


\begin{proof} Let $b$ be the closest point in the fixed point set of $A$ to $e_{1}$. Let $\hat{b} = (\cosh \delta, \sinh \delta, 0,\dotsc ,0)$. Then $d_{\mathbb{H}}(e_1,b)=d_{\mathbb{H}}(e_1,\hat{b})=\delta$. Therefore, there exists $\hat{A}\in E(n)$ such that $\hat{A}\hat{b}=b.$

Let \[T = \begin{pmatrix} \cosh \delta& \sinh \delta &  &  &  &  \\\sinh \delta  & \cosh \delta &  &  &  &  \\ &  & 1 &  &  &  \\ &  &  &    . &  &  \\ &  &  &  &    . &  \\ &  &  &  &  & 1\end{pmatrix},\] define $B:=\hat{A}T$ and let $\tilde{A}=B^{-1}AB$. Since $\tilde{A}$ fixes $e_1$, $\tilde{A}\in E(n)$ and 


\begin{equation*}
\begin{split}
\norm{\tilde{A} - I_{n+1}}&= \norm{B^{-1}AB -I_{n+1}}\\
&= \norm{B^{-1}(A - I_{n+1})B}\\ 
&\leq \norm{B^{-1}}\norm{A - I_{n+1}}\norm{B}\\ 
&= \norm{B}^{2}\norm{A -I_{n+1}}\,\,\left(\text{ since }\norm{B}=\norm{B^{-1}}\text{ for all }B \in O^{+}(1,n)\right).
\end{split}
\end{equation*}


Furthermore
\begin{equation*}
\begin{split}
B^{t}B &= (\hat{A}T)^{t}(\hat{A}T)\\ 
&= T^{t}\hat{A}^{t}\hat{A}T\\ 
&= T^{t}\hat{A}^{t}\hat{A}T\text{ (by equation~\ref{in-tr})}\\ 
&= T^{t}T\\ 
&= \begin{pmatrix} \cosh 2\delta & \sinh 2\delta &  &  &  &  \\\sinh 2\delta  & \cosh 2\delta &  &  &  &  \\ &  & 1 &  &  &  \\ &  &  &    . &  &  \\ &  &  &  &    . &  \\ &  &  &  &  & 1\end{pmatrix}.
\end{split}
\end{equation*}

The eigenvalues of $B^{t}B$ are $1, e^{-2\delta}$ and $e^{2\delta}$, thus $\norm{B} = e^{\delta}.$ Therefore
\begin{equation*}
\begin{split}
\norm{A -I_{n+1}} &\geq \norm{\tilde{A} - I_{n+1}}\norm{B}^{-2}\\ 
&= \norm{\tilde{A} - I_{n+1}}e^{-2\delta}\\ 
&\geq 2\sin(\frac{\pi}{k})e^{-2\delta}\text{    (by Proposition~\ref{E(n)result})}.
\end{split}
\end{equation*} \end{proof}


In Proposition~\ref{conE(n)result1} our bound goes to zero as $\delta \rightarrow \infty$. This is our estimate for ``small'' values of $\delta$. The following proposition uses a different method to address the case where $\delta$ is ``large". If the fixed point set of an elliptic element $A$ is far from $e_1$ then $A$ must move $e_1$ a significant amount. This ensures, by the definition of the operator norm, that $\norm{A-I_{n+1}}$ can be bounded away from zero.

\begin{prop} 
\label{conE(n)result2}
Let $A \in O^{+}(1,n)$ be an elliptic element of order at most $k$, which does not fix $e_{1}$. Let $\delta$ be the hyperbolic distance from $e_{1}$ to the fixed point set  of $A$. Then
\[\norm{A - I_{n+1}} \geq 2\sinh^{2}\delta\sin^{2}\left(\frac{\pi}{k}\right).\]
\end{prop}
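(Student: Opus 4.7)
The plan is to bound $\|A - I_{n+1}\|$ below by the Euclidean length of $Ae_1 - e_1$, and then by the difference in just the first coordinate. Setting $d := d_{\mathbb{H}}(e_1, Ae_1)$, the definition of the operator norm (with $|e_1|=1$) together with the formula $(Ae_1)_1 = \cosh d$ from Section~\ref{123} give
\[\|A - I_{n+1}\| \;\geq\; |Ae_1 - e_1| \;\geq\; |(Ae_1)_1 - 1| \;=\; \cosh d - 1,\]
so it suffices to prove $\cosh d - 1 \geq 2\sinh^2\delta \, \sin^2(\pi/k)$.

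I would next apply hyperbolic trigonometry. Let $b$ be the nearest point to $e_1$ in the fixed set $F$ of $A$; then $d_{\mathbb{H}}(b, e_1) = \delta$, and since $A$ is an isometry fixing $b$ we also have $d_{\mathbb{H}}(b, Ae_1) = \delta$. The triangle with vertices $b, e_1, Ae_1$ lies in a totally geodesic $\mathbb{H}^2$ and is isoceles with two sides of length $\delta$. Denoting its angle at $b$ by $\alpha$, the hyperbolic law of cosines rearranges to
\[\cosh d - 1 \;=\; 2\sinh^2 \delta \, \sin^2(\alpha/2),\]
so the claim reduces to showing $\sin(\alpha/2) \geq \sin(\pi/k)$.

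To bound $\alpha$, I would identify it with the angle between the unit tangent vector $v$ of the geodesic $b \to e_1$ and its image $dA_b(v)$ (the initial tangent of $b \to Ae_1$), measured in the Riemannian inner product on $T_b \mathbb{H}^n$. Because $b$ minimizes distance to $F$, $v$ lies in the normal subspace $N_b F$; on $N_b F$ the orthogonal map $dA_b$ has no $+1$-eigendirection (otherwise $F$ would contain an additional geodesic through $b$), so its eigenvalues there are $-1$ and complex pairs $e^{\pm i \theta_j}$ with each $\theta_j \geq 2\pi/k$ by Remark~\ref{thetabound}. Decomposing $v$ into these invariant subspaces and combining $\sin^2(\theta_j/2) \geq \sin^2(\pi/k)$ with the trivial $1 \geq \sin^2(\pi/k)$ gives $1 - \cos\alpha \geq 2\sin^2(\pi/k)$, which is exactly the required bound. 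The main obstacle is precisely this last step: when $F$ has codimension greater than two, $A|_{N_b F}$ can mix several distinct rotation angles and a nontrivial $-1$-eigenspace, and the eigenspace decomposition above is what allows a single uniform lower bound to cover all cases.
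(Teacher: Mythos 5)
Your proposal is correct and follows essentially the same route as the paper: bound $\norm{A-I_{n+1}}$ below by $|a_{11}-1|=\cosh d_{\mathbb{H}}(e_1,Ae_1)-1$, apply the hyperbolic law of cosines to the isoceles triangle with vertices $b$, $e_1$, $Ae_1$, and then bound the angle at $b$ below by $2\pi/k$. The only difference is that you supply an eigenspace-decomposition proof of the angle bound, which the paper isolates as Lemma~\ref{lemmaforCon2} and asserts without proof; your argument for it is sound.
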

The proof of the following lemma is straight-forward.

\begin{lemma} 
\label{lemmaforCon2}
Let $A$ be an elliptic isometry of $\mathbb{H}^{n}$ of order at most $k$. Let $b$ be a fixed point of $A$ and let $v$ be an element of $\mathbb{H}^{n}$ such that the geodesic $g_{1}$ containing $v$ and $b$ is perpendicular to the fixed point set of $A$. Suppose $g_{2}$ is the geodesic containing $b$ and $A(v)$, then the angle of intersection $\theta$ between $g_{1}$ and $g_{2}$ is greater than or equal to $\frac{2\pi}{k}$.$\qed$
\end{lemma}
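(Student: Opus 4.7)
The plan is to reduce the hyperbolic-geometric statement to a linear-algebra calculation in the tangent space at $b$, where $A$ acts orthogonally. Let $F$ denote the fixed point set of $A$, a totally geodesic subspace of $\mathbb{H}^n$ through $b$, and split $T_b\mathbb{H}^n = T_bF \oplus N_bF$ orthogonally. Since $A$ fixes $F$ pointwise, $dA_b$ is the identity on $T_bF$ and restricts to an orthogonal transformation of $N_bF$ with no $+1$-eigenvalue (otherwise $F$ would extend). Let $u\in N_bF$ be the unit initial tangent vector of $g_1$ at $b$ pointing toward $v$; it lies in $N_bF$ because $g_1\perp F$.

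Next I would observe that $g_2 = A(g_1)$. Indeed, $A$ sends $g_1$ to a geodesic through $A(b)=b$ with initial tangent vector $dA_b(u)$, and this image geodesic contains $A(v)$, so uniqueness of the geodesic through the two distinct points $b$ and $A(v)$ forces it to coincide with $g_2$. Consequently $\theta$ equals the angle in $N_bF$ between $u$ and $dA_b(u)$; equivalently, $\cos\theta = \langle u, dA_b(u)\rangle$.

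The remaining step is to bound this inner product via the real orthogonal canonical form (the same one used in Lemma~\ref{basic}). Decompose $N_bF$ orthogonally into $2$-dimensional $A$-invariant planes $V_i$ on which $dA_b$ acts as rotation by an angle $\theta_i\in(0,\pi)$, together with a (possibly trivial) $(-1)$-eigenspace $W$. Since the order of $dA_b$ divides the order of $A$, each rotation block has order at most $k$, and therefore each $\theta_i\geq 2\pi/k$, exactly as in Remark~\ref{thetabound}. Writing $u=\sum_i u_i + w$ in this orthogonal decomposition and using invariance of each summand,
\[\cos\theta \;=\; \langle u, dA_b(u)\rangle \;=\; \sum_i |u_i|^2\cos\theta_i \;-\; |w|^2.\]
Since cosine is decreasing on $[0,\pi]$, $\cos\theta_i\leq\cos(2\pi/k)$; combined with the trivial bound $-1\leq\cos(2\pi/k)$, this gives $\cos\theta \leq \cos(2\pi/k)\,|u|^2 = \cos(2\pi/k)$, so $\theta\geq 2\pi/k$.

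The only step requiring some genuine care is the identification $g_2 = A(g_1)$ in the second paragraph, which hinges on the uniqueness of geodesics through two distinct points; everything else is the spectral decomposition of an orthogonal transformation combined with a convex-combination estimate on cosine, a direct analogue of the computation in Proposition~\ref{E(n)result}.
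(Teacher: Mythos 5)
Your proof is correct. The paper gives no argument for this lemma (it is dismissed as ``straight-forward'' with an immediate $\qed$), and your linearization at $b$ --- identifying $g_2=A(g_1)$, reducing to $\cos\theta=\langle u,dA_b(u)\rangle$ on the normal space $N_bF$, and invoking the canonical form and angle bound already recorded in Lemma~\ref{basic} and Remark~\ref{thetabound} --- is exactly the argument the paper is implicitly relying on, correctly carried out (including the interpretation of $\theta$ as the angle between the rays toward $v$ and $A(v)$, which is what the cosine-rule computation in Proposition~\ref{conE(n)result2} requires).
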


\emph{Proof of Proposition~\ref{conE(n)result2}:}

Let $A = (a_{ij})$. By the definition of the operator norm we have the following
\begin{eqnarray*}
\|A -I_{n+1}\| &\geq& |(A - I_{n+1})e_{1}|\\ & = & |Ae_{1} - e_{1}| \\
                                                                    & =  & \left|(a_{11}-1, a_{21}, \dots ,a_{(n+1)1} )\right| \\
                                                                    & \geq & |a_{11} - 1|.\end{eqnarray*} On the other hand $\cosh d_{\mathbb{H}}(e_{1}, Ae_{1}) = 1\cdot a_{11} = a_{11}$. Therefore \[\|A -I_{n+1}\| \geq |\cosh d_{\mathbb{H}}(e_{1},Ae_{1}) - 1|.\]

Let $b$ be the point in the fixed point set of $A$ closest to $e_{1}$. We can apply hyperbolic cosine rule to the triangle with vertices $e_{1}$, $Ae_{1}$ and $b$. If $\theta$ is the angle of the triangle at the point $b$, then \[\cosh d_{\mathbb{H}}(e_{1},Ae_{1}) = \cosh d_{\mathbb{H}}(b,e_{1})\cosh d_{\mathbb{H}}(b,Ae_{1}) - \sinh d_{\mathbb{H}}(b,e_{1})\sinh d_{\mathbb{H}}(b,Ae_{1})\cos\theta.\]
Therefore,\begin{eqnarray*}
\cosh d_{\mathbb{H}}(e_{1},Ae_{1}) & = & \cosh^{2}\delta - \sinh^{2}\delta\cos\theta\\ 
&=& \cosh^{2}\delta - \sinh^{2}\delta(1 - 2\sin^2(\theta/2)) \\
                                    & = & \cosh^{2}\delta - \sinh^{2}\delta + 2\sinh^{2}\delta\sin^{2}(\theta/2) \\
                                    & = & 1 + 2\sinh^{2}\delta\sin^{2}(\theta/2).
                                      \end{eqnarray*}
Hence,
\begin{eqnarray*}
\|A -I_{n+1}\| &\geq& |1 + 2\sinh^{2}\delta\sin^{2}(\theta/2) - 1|\\
 & = & 2\sinh^{2}\delta\sin^{2}(\theta/2) \\
  & \geq & 2\sinh^{2}\delta\sin^{2}(\pi/k)\mbox{  (by Lemma~\ref{lemmaforCon2})}.\,\,\qed
\end{eqnarray*}

The following lemma follows immediately from Propositions~\ref{conE(n)result1} and~\ref{conE(n)result2}.

\begin{lemma}
Let $A\in O^{+}(1,n)$ be an elliptic element of order at most $k$. Let $\delta$ be the hyperbolic distance from $e_{1}$ to the fixed point set  of $A$. Then \[\|A - I_{n+1}\| \geq \max \left\{2\sinh^{2}\delta\sin^{2}\left(\frac{\pi}{k}\right), 2\sin\left(\frac{\pi}{k}\right)e^{-2\delta}\right\}.\qed\]\end{lemma}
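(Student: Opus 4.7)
The plan is simply to read off the two quantities inside the max from Propositions~\ref{conE(n)result1} and~\ref{conE(n)result2}, and then verify that the boundary case $\delta = 0$ is also handled correctly.

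If $\delta > 0$, so that $A$ does not fix $e_1$, both propositions apply. Proposition~\ref{conE(n)result1} yields $\|A - I_{n+1}\| \geq 2\sin(\pi/k)e^{-2\delta}$, while Proposition~\ref{conE(n)result2} yields $\|A - I_{n+1}\| \geq 2\sinh^2\delta\sin^2(\pi/k)$. Taking the maximum of these two estimates produces the desired bound in this case.

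If $\delta = 0$, then $A$ fixes $e_1$ and the two cited propositions do not directly apply, since both explicitly assume that $A$ does not fix $e_1$. However, in this case the right-hand side of the claimed inequality evaluates to $\max\{0,\,2\sin(\pi/k)\} = 2\sin(\pi/k)$, which is precisely the bound supplied by Proposition~\ref{E(n)result} applied to the non-identity element $A \in E(n)$ of order at most $k$.

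Since this is purely a bookkeeping combination of three already-proved statements, I do not anticipate any real obstacle. The only thing to be careful about is noticing that the two conjugation-based propositions exclude the fixed-$e_1$ case by hypothesis, so Proposition~\ref{E(n)result} must be invoked to cover $\delta = 0$; the two bounds in the max match up exactly so that this patching is seamless.
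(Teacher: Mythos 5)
Your proposal is correct and follows the same route as the paper, which simply states that the lemma ``follows immediately'' from Propositions~\ref{conE(n)result1} and~\ref{conE(n)result2}. Your extra step of patching the $\delta=0$ case via Proposition~\ref{E(n)result} is a point of care the paper glosses over, and you verify correctly that the two bounds agree there.
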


Hence, we have
 \begin{lemma}
 \label{setup1}
Let $A\in O^{+}(1,n)$ be an elliptic element of order at most $k$. Then 
\[\|A - I_{n+1}\| \geq \inf_{\delta>0} \max \left\{2\sinh^{2}\delta\sin^{2}\left(\frac{\pi}{k}\right), 2\sin\left(\frac{\pi}{k}\right)e^{-2\delta}\right\}.\qed\]\end{lemma}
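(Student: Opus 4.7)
The plan is to deduce the stated inequality from the immediately preceding lemma and Proposition~\ref{E(n)result} by splitting into two cases according to whether $A$ fixes $e_{1}$. Set $f(\delta) := \max\{2\sinh^{2}\delta\sin^{2}(\pi/k),\,2\sin(\pi/k)e^{-2\delta}\}$, so the claim becomes $\|A-I_{n+1}\| \geq \inf_{\delta>0} f(\delta)$.

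If $A$ does not fix $e_{1}$, then the hyperbolic distance $\delta_{0}$ from $e_{1}$ to the fixed point set of $A$ is strictly positive, and the preceding lemma applied at $\delta = \delta_{0}$ immediately gives $\|A-I_{n+1}\| \geq f(\delta_{0}) \geq \inf_{\delta>0} f(\delta)$, finishing this case.

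If instead $A \in E(n)$, Proposition~\ref{E(n)result} yields $\|A-I_{n+1}\| \geq 2\sin(\pi/k)$, and it remains to verify $\inf_{\delta>0} f(\delta) \leq 2\sin(\pi/k)$. This is a short limit observation: for every $\delta > 0$ we have $2\sin(\pi/k)e^{-2\delta} < 2\sin(\pi/k)$, and as $\delta \to 0^{+}$ the term $2\sinh^{2}\delta\sin^{2}(\pi/k)$ tends to $0$. Hence for all sufficiently small $\delta > 0$ both entries of the max are strictly less than $2\sin(\pi/k)$, so $f(\delta) < 2\sin(\pi/k)$, and the infimum is dominated by $2\sin(\pi/k)$.

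There is no genuine obstacle. The only point requiring slight care is Case~2: at $\delta = 0$ the preceding lemma collapses to the vacuous bound $\|A-I_{n+1}\| \geq 0$, so Proposition~\ref{E(n)result} must be invoked as a separate input to cover $A \in E(n)$. Apart from that, the lemma is essentially a repackaging of the earlier propositions into a form in which $\delta$ no longer depends on $A$, which is the convenient formulation for the optimization over $\delta$ to be carried out in the next section.
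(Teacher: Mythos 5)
Your proof is correct and follows the same route the paper intends: the paper states this lemma with no written proof, treating it as an immediate consequence of the preceding lemma (one passes from the bound at the particular $\delta$ attached to $A$ to the weaker bound given by the infimum over all $\delta>0$). Your only addition is the explicit handling of the case where $A$ fixes $e_{1}$, i.e.\ $\delta=0$ lies outside the range of the infimum; this is a genuine (if minor) gap in the paper's ``immediate'' deduction, and your appeal to Proposition~\ref{E(n)result} together with the observation that $\inf_{\delta>0}f(\delta)\leq 2\sin(\pi/k)$ closes it correctly.
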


\begin{proof}[Proof of Proposition~\ref{origthm1}] We must show that the bound of Lemma~\ref{setup1} agrees with the uniform bound $c_{k}$. We divide the proof into two cases. First, assume $\delta\geq 1$. Since $\sinh^{2}(\delta)$ is an increasing function, we have that 

\[ \begin{array}{lcl}
2\sinh^{2}(\delta)\sin^{2}(\frac{\pi}{k})&\geq& 2\sinh^{2}(1)\sin^{2}(\frac{\pi}{k}) \\
 &\geq& 2\sin^{2}(\frac{\pi}{k})e^{-2}. \end{array} 
\]

Now assume $\delta\leq 1$. Note that $e^{-2\delta}$ is a decreasing function. Also note that since $A$ is non-trivial, $k > 1$. Therefore $\sin^{2}(\frac{\pi}{k})\leq \sin(\frac{\pi}{k})$. Hence,

\[ \begin{array}{lcl}
2\sin(\frac{\pi}{k})e^{-2\delta}&\geq& 2\sin^{2}(\frac{\pi}{k})e^{-2\delta} \\
 &\geq& 2\sin^{2}(\frac{\pi}{k})e^{-2}. \end{array} 
\]

Therefore, by Lemma~\ref{setup1},

\[ \begin{array}{lcl}
\|A - I_{n+1}\| &\geq& \inf_{\delta>0} \max 
\left\{2\sinh^{2}\delta\sin^{2}(\frac{\pi}{k}), 
2\sin(\frac{\pi}{k})e^{-2\delta}\right\} \\
 &\geq& 2\sin^{2}(\frac{\pi}{k})e^{-2}. \end{array} 
\]
And we are done.\end{proof}

\section{Norm Bound for Low-Order Torsion Groups}
\label{OrigThm2}
On his way to providing lower bounds on the volume of hyperbolic manifolds Martin proved in \cite{Mar1} the following theorem.

\begin{theorem}[Martin] \label{MarBalls} Let $\Gamma$ be a discrete non-elementary torsion free subgroup of $O^{+}(1,n)$. Then there is an $\alpha \in O^{+}(1,n)$ such that
\[\|A\|\,\|A - I_{n+1}\| \geq \frac{1}{2\sqrt{2}}\]
for all $A \in \alpha\Gamma\alpha^{-1}$, $A \neq I_{n+1}$.$\qed$\end{theorem}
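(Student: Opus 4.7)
The plan is to treat this as a higher-dimensional J\o rgensen-type inequality, proved via commutator iteration, with the conjugation by $\alpha$ playing the role of a choice of basepoint. Set $\lambda(A) := \|A\|\,\|A - I_{n+1}\|$. The first reduction is geometric: conjugation of $\Gamma$ by $\alpha \in O^{+}(1,n)$ has the effect of moving the basepoint from $e_1$ to $\alpha^{-1}(e_1) \in \mathbb{H}^n$, so the theorem is equivalent to asserting the existence of some basepoint $p \in \mathbb{H}^n$ for which $\lambda_p(C) \geq 1/(2\sqrt{2})$ for every non-identity $C \in \Gamma$, where $\lambda_p$ is the quantity computed using $p$ in place of $e_1$.

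The main analytic tool I would develop is a commutator estimate. By expanding $ABA^{-1}B^{-1} - I_{n+1}$ into terms involving $A - I_{n+1}$ and $B - I_{n+1}$ (using the identity $A^{-1} = JA^{t}J$ for the Lorentz form $J$ to keep norms of inverses under control), and then applying submultiplicativity of the operator norm, one obtains an inequality of the shape
\[\lambda([A,B]) \leq C\,\lambda(A)^2\,\|B\|^2\]
for an explicit universal constant $C$. Its upshot is that elements with small $\lambda$-value produce commutators with dramatically smaller $\lambda$-value.

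With these ingredients in hand I would proceed by contradiction. Suppose no $\alpha$ works; then at every basepoint $p$ one extracts an element $A_p \in \Gamma$ with $\lambda_p(A_p) < 1/(2\sqrt{2})$. The non-elementary hypothesis permits the choice of a companion element $B \in \Gamma$ such that the iterated commutator sequence $A_p^{(0)} := A_p$, $A_p^{(k+1)} := [A_p^{(k)}, B]$ remains non-trivial at every step. The commutator estimate then forces $\lambda_p(A_p^{(k)}) \to 0$, and since $\|C\| \geq 1$ for every non-identity $C \in O^{+}(1,n)$, one concludes $\|A_p^{(k)} - I_{n+1}\| \to 0$, contradicting discreteness of $\Gamma$.

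The main obstacle I anticipate is calibrating the commutator estimate tightly enough to deliver the explicit constant $1/(2\sqrt{2})$ rather than some weaker abstract threshold. Achieving this will likely require working in a model (for instance the M\"obius group on the upper half-space, via Clifford matrices) where the commutator expansion can be carried out with sharp constants, as Martin does in his original paper. A secondary delicacy is controlling the growth of the $\|B\|^2$ factor across iterations, ensuring the commutator bound continues to improve at each step rather than being swamped by the companion element's norm; handling this cleanly typically requires replacing $B$ at each stage with a carefully chosen new element, or absorbing $\|B\|^2$ into the initial threshold via a preliminary normalization of $A_p$.
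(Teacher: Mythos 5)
First, a point of comparison: the paper does not actually prove Theorem~\ref{MarBalls}; it is quoted from Martin \cite{Mar1} without proof, and the nearest argument in the paper is the proof of its orbifold analogue, Proposition~\ref{origthm2}, which follows Martin's original strategy. That strategy is \emph{global}: one conjugates by a one-parameter family $\alpha_t$ of translations along a geodesic whose endpoints are fixed by no element of $\Gamma$, shows that each set $U_A=\{t>0:\|\alpha_tA\alpha_t^{-1}-I_{n+1}\|<c\}$ is bounded (Lemma~\ref{21}), extracts by a connectedness/chaining argument a sequence $t(i)\to\infty$ and elements $A_i$ with $A_i$ and $A_{i+1}$ both small at time $t(i)$, applies the nilpotency criterion (Theorem~\ref{Her}) to conclude that all the $A_i$ lie in a single elementary subgroup, and then uses convergence-group dynamics to force $\|\alpha_{t(i)}A_i\alpha_{t(i)}^{-1}\|\to\infty$, contradicting the smallness.

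The gap in your proposal lies in the contradiction step, and it is structural rather than a matter of calibrating constants. You extract, at each basepoint $p$, one element $A_p$ with $\lambda_p(A_p)$ small and then run a commutator iteration at that single basepoint. But all such an iteration can deliver, via discreteness, is that $\langle A_p,B\rangle$ is nilpotent (hence elementary) for any companion $B$ that is also small at $p$ --- which is not in tension with $\Gamma$ being non-elementary. Your claim that non-elementarity ``permits the choice of a companion $B$ keeping all iterated commutators non-trivial'' is precisely what fails: if, say, $A_p$ is parabolic and $p$ is near its fixed point, every element that is small at $p$ may share that fixed point, and the iteration terminates harmlessly. Indeed, bad elements genuinely exist at many basepoints (a parabolic barely moves points near its fixed point), so no contradiction can be reached by working at one basepoint; the entire content of the theorem is that \emph{one} basepoint can be chosen to work uniformly for \emph{all} of $\Gamma$, and this forces you to track how the set of bad elements changes as the basepoint moves --- which is exactly what the covering/chain argument does. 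The commutator iteration you describe is the right engine for the auxiliary nilpotency criterion (Theorem~\ref{Her}), not for Theorem~\ref{MarBalls} itself. A minor additional point: the estimate that drives that iteration is $\|[A,B]-I_{n+1}\|\le 2\|A\|\,\|B\|\,\|A-I_{n+1}\|\,\|B-I_{n+1}\|=2\lambda(A)\lambda(B)$, using $\|A^{-1}\|=\|A\|$ in $O^{+}(1,n)$; the bound $\lambda([A,B])\le C\,\lambda(A)^2\|B\|^2$ you propose is not of the right shape to contract under iteration.
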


In this section we prove an orbifold version of this result. That is we drop the condition that $\Gamma$ is torsion free and replace it with a bound on the order of torsion. Our proof is similar in outline to Martin's, with Proposition~\ref{origthm1} allowing us to control elliptic elements. 

Recall from the beginning of section~\ref{OrigThm1}, $c_k :=2\sin^2\left(\frac{\pi}{k}\right)e^{-2}.$ The following proposition is our orbifold version of Theorem~\ref{MarBalls}.

\begin{prop}  
\label{origthm2}
Let $\Gamma$ be a discrete non-elementary subgroup of $SO^{+}(1,n)$ which contains no torsion elements of order greater than $k$. Then there is an $\alpha \in O^{+}(1,n)$ such that
$$
\|A-I_{n+1}\| \geq c_{k} 
$$
for all $A \in \alpha \Gamma\alpha^{-1}$, $A\neq I_{n+1}$.\end{prop}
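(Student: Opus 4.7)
The plan is to mirror Martin's argument for Theorem~\ref{MarBalls}, using Proposition~\ref{origthm1} to absorb the elliptic elements. Since conjugation by any $\alpha\in O^{+}(1,n)$ preserves both the property of being elliptic and the order of a torsion element, every non-identity elliptic element of $\alpha\Gamma\alpha^{-1}$ is elliptic of order at most $k$ in $O^{+}(1,n)$ and therefore already satisfies $\|A-I_{n+1}\|\geq c_k$ by Proposition~\ref{origthm1}, regardless of the choice of $\alpha$. Thus the task reduces to producing $\alpha\in O^{+}(1,n)$ such that every non-elliptic (infinite order) element $A$ of $\alpha\Gamma\alpha^{-1}$ also satisfies $\|A-I_{n+1}\|\geq c_k$.

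Next I would translate this into a displacement problem at $e_1$. From $\|A-I_{n+1}\|\geq|Ae_1-e_1|$ and the fact that $Ae_1$ lies on the hyperboloid, a computation essentially identical to the one in the proof of Proposition~\ref{conE(n)result2} yields $|Ae_1-e_1|^2=2\cosh d(\cosh d-1)$ where $d=d_{\mathbb{H}}(e_1,Ae_1)$. Hence there is a threshold $d_k>0$, depending only on $c_k$, such that $d_{\mathbb{H}}(e_1,Ae_1)\geq d_k$ implies $\|A-I_{n+1}\|\geq c_k$. Setting $x_0=\alpha^{-1}(e_1)$, it suffices to produce a basepoint $x_0\in\mathbb{H}^n$ at which every non-elliptic $A\in\Gamma\setminus\{I_{n+1}\}$ satisfies $d_{\mathbb{H}}(x_0,Ax_0)\geq d_k$.

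The existence of such $x_0$ would then be established by Martin's Margulis/J{\o}rgensen-style argument, whose core ingredients are a commutator norm estimate---controlling $\|[A,B]-I_{n+1}\|$ by a product involving $\|A-I_{n+1}\|$ and $\|B-I_{n+1}\|$---together with the discreteness of $\Gamma$. Iterating this estimate forces the set of $A\in\Gamma$ with $\|A-I_{n+1}\|<c_k$ to generate an elementary subgroup, i.e.\ one fixing a common point of $\overline{\mathbb{H}^n}$; for such elements this common fixed locus captures their short-displacement tubes. Since $\Gamma$ itself is non-elementary, one may then take $\alpha$ to be a hyperbolic boost moving $e_1$ away from this fixed locus, which raises the displacement of every previously troublesome non-elliptic element above the threshold $d_k$.

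The chief obstacle is the quantitative commutator-plus-discreteness step, which has to be calibrated carefully against the specific constant $c_k=2\sin^{2}(\pi/k)e^{-2}$ and carried out for each possible type (parabolic, loxodromic-cyclic, or higher-rank abelian) of elementary subgroup generated by the small-norm elements. Non-elementarity of $\Gamma$ is precisely what rules out the pathology in which the short-displacement locus of non-elliptic elements covers all of $\mathbb{H}^n$, guaranteeing the existence of an admissible conjugating element $\alpha$.
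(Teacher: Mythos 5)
Your opening reduction is sound and in fact matches what the paper does inside Claim~\ref{claim2}: since conjugation preserves ellipticity and the order of torsion, Proposition~\ref{origthm1} already gives $\|A-I_{n+1}\|\geq c_k$ for every elliptic $A$ of order at most $k$ in \emph{any} conjugate of $\Gamma$, so only the parabolic and hyperbolic elements are at issue; the translation into a displacement bound at $\alpha^{-1}(e_1)$ is also fine. The problem is that everything after that is a gesture at the Margulis lemma rather than a proof, and the gesture skips the two places where the real work lies. First, Theorem~\ref{Her} only tells you that any \emph{pair} of elements at norm distance less than $\tau>c_k$ from the identity generates a nilpotent, hence elementary, group; passing from this pairwise statement to the claim that \emph{all} small-norm non-elliptic elements at a given basepoint share a common fixed locus requires an argument (in the paper it comes from chaining consecutive pairs and sorting the elements by type). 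Second, and more seriously, your final step --- ``take $\alpha$ to be a hyperbolic boost moving $e_1$ away from this fixed locus'' --- does not close the argument: conjugating by a boost changes which elements of $\Gamma$ have small norm, so escaping the fixed locus of the currently offending elements may simply create new offenders, and the process need not terminate. Your closing assertion, that non-elementarity of $\Gamma$ rules out the short-displacement locus covering all of $\mathbb{H}^n$ at the threshold $c_k$, is not an available fact; it is essentially the proposition itself.

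The paper resolves exactly this difficulty by arguing by contradiction along the one-parameter family $\alpha_t$ of boosts from $S$ to $N$, chosen after conjugating so that no element of $\Gamma$ fixes $N$ or $S$. If the proposition fails, the sets $U_A$ of Definition~\ref{UA} form an open cover of the positive reals by bounded sets (boundedness is Lemma~\ref{21}); a connectedness/chaining argument (Lemma~\ref{23}) extracts $t(i)\to\infty$ and elements $A_i$ with $A_i$ and $A_{i+1}$ simultaneously small at time $t(i)$, so that by Theorem~\ref{Her} all the $A_i$ are non-elliptic and share a common fixed point set; finally the convergence-group property forces $\lim_{j\to\infty}\|\alpha_{t(i_j)}A_{i_j}\alpha_{t(i_j)}^{-1}\|=\infty$, contradicting $\|\alpha_{t(i)}A_i\alpha_{t(i)}^{-1}-I_{n+1}\|<c_k<1$. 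You would need to supply an argument of comparable substance --- either this one or a genuinely quantitative Margulis-type decomposition of the thin part --- before your outline becomes a proof.
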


The proof will be by contradiction. In proving Proposition~\ref{origthm2}, we will pass between the ball and hyperboloid models of hyperbolic $n$-space. 


As $\Gamma$ is discrete, it is countable. Therefore $\Gamma$ has a countable number of parabolic and hyperbolic fixed points on the boundary of $\mathbb{B}^{n}$. Furthermore the set of fixed points of each elliptic element on the boundary  of $\mathbb{B}^{n}$ form at most a co-dimension 2 subspace of $\mathbb{S}^{n-1}$. Therefore, we may assume (by conjugation) that no element of the group $\Gamma$ fixes the north or south poles  (resp.~$N, S$) of $\mathbb{S}^{n-1}$.
~For each $t > 0$ let  $\alpha_t$ represent, in the ball or hyperboloid model, the hyperbolic isometry that corresponds to a pure translation by $t$ in the geodesic from $S$ to $N$.

\begin{lemma} 
\label{21}
For each fixed $A  \in \Gamma - \{I_{n+1}\}$, $ \lim_{t \rightarrow \infty}\|\alpha_t A\alpha_t^{-1}\| = \infty$.
\end{lemma}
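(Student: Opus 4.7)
The plan is to exhibit, for each nontrivial $A$, a single unit vector whose image under $\alpha_t A \alpha_t^{-1}$ has Euclidean norm tending to infinity. Choose coordinates so that $\alpha_t$ is the Lorentz boost in the $x_1 x_2$-plane,
\[
\alpha_t = \begin{pmatrix} \cosh t & \sinh t & 0 \\ \sinh t & \cosh t & 0 \\ 0 & 0 & I_{n-1}\end{pmatrix}.
\]
Under the hyperboloid-to-ball identification, the boundary poles $S$ and $N$ correspond to the lightlike directions $e_1 - e_2$ and $e_1 + e_2$. Accordingly, set $v := \tfrac{1}{\sqrt{2}}(e_1 - e_2)$ and $w := \tfrac{1}{\sqrt{2}}(e_1 + e_2)$; these are Euclidean unit vectors, mutually orthogonal, and a direct check gives $\alpha_t v = e^{-t}v$ and $\alpha_t w = e^{t}w$, while $\alpha_t$ acts as the identity on the Euclidean orthogonal complement of $\mathrm{span}\{v,w\} = \mathrm{span}\{e_1,e_2\}$.

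Next, decompose $A v = \alpha\, w + \beta\, v + p$ with $p \perp \mathrm{span}\{v,w\}$. Since $\alpha_t^{-1} v = e^{t} v$, applying $\alpha_t \circ A$ afterwards gives
\[
\alpha_t A \alpha_t^{-1} v \;=\; e^{2t}\alpha\, w \;+\; \beta\, v \;+\; e^{t} p.
\]
Mutual Euclidean orthogonality of $v,w,p$ then yields
\[
\bigl|\alpha_t A \alpha_t^{-1} v\bigr|^{2} \;=\; e^{4t}\alpha^{2} + \beta^{2} + e^{2t}|p|^{2},
\]
which tends to $\infty$ with $t$ provided $(\alpha, p) \neq (0,0)$.

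The only delicate step -- and the one that actually invokes the hypotheses of Lemma~\ref{21} -- is ruling out $\alpha = 0$ and $p = 0$. If both vanished, then $Av = \beta v$, i.e., $A$ would preserve the direction of the lightlike vector $e_1 - e_2$; but that direction corresponds to the boundary point $S$, which by the prior conjugation is fixed by no nontrivial element of $\Gamma$. Hence $(\alpha,p) \neq (0,0)$, and
\[
\|\alpha_t A \alpha_t^{-1}\| \;\geq\; \bigl|\alpha_t A \alpha_t^{-1} v\bigr| \;\longrightarrow\; \infty.
\]
I do not anticipate a serious technical obstacle here; the main care lies in passing between the Euclidean operator norm required by the conclusion and the Lorentzian/boundary picture that lets us read off $\alpha_t$'s eigendirections and identify $S$ with the lightlike direction $e_1 - e_2$.
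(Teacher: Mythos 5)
Your proof is correct, and it takes a more computational route than the paper's. The paper argues dynamically in the ball model: it tracks the single orbit point $\alpha_t A\alpha_t^{-1}(\vec 0)$, noting that $\alpha_t^{-1}(\vec 0)\to S$, that $A(S)\neq S$ by the standing conjugation assumption, and that $\alpha_t$ then pushes any point bounded away from $S$ toward $N$; transferring back to the hyperboloid model, the Euclidean norm of the image of $e_1$ diverges, which bounds the operator norm from below. You instead stay entirely in the Lorentzian picture, diagonalizing the boost on the lightlike eigenvectors $v=\tfrac{1}{\sqrt2}(e_1-e_2)$ and $w=\tfrac{1}{\sqrt2}(e_1+e_2)$ and reading off exponential growth of $\lvert\alpha_t A\alpha_t^{-1}v\rvert$ unless $Av$ is proportional to $v$ -- which is exactly the statement that $A$ fixes the boundary point $S$, excluded by the same conjugation assumption the paper relies on. So both proofs pivot on the identical hypothesis, but yours replaces the ball-model limit argument with an explicit eigenvector decomposition. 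What your version buys is a self-contained, quantitative estimate (growth at least like $e^{t}$, and $e^{2t}$ when $\alpha\neq 0$) that avoids any appeal to the correspondence between convergence to the boundary of $\mathbb{B}^n$ and divergence of the Euclidean norm on the hyperboloid; what the paper's version buys is brevity and coordinate-freeness, and its template is reused almost verbatim in the proof of Claim~\ref{claim2}. One small point worth making explicit in your write-up: since $A\in O^{+}(1,n)$ is orthochronous it preserves the forward light cone, so ``$Av=\beta v$'' with $\beta$ necessarily positive is genuinely equivalent to $A$ fixing the ideal point $S$; as stated your argument only needs the (weaker) fact that preserving the line through $v$ implies fixing $S$, which is fine.
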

      
\begin{proof} Let  $\vec{0}$ be the origin in $\mathbb{B}^n$. Note that $\lim_{t \rightarrow \infty}\alpha_t^{-1}(\vec{0}) = S$. Hence,  for $A \in \Gamma-\{I_{n+1}\}, \lim_{t\rightarrow \infty}A\alpha_t^{-1}(\vec{0}) = A(S)$. Since $A(S) \neq S$, there exist a neighborhood $V$ of $S$ in $\bar{\mathbb{B}}^{n}$ such that $A(S) \notin V$. Thus, $\lim_{t\rightarrow \infty}\alpha_tA\alpha_t^{-1}(\vec{0})=N$. Transferring from the ball model to the hyperboloid model, we have $\lim_{t\rightarrow \infty}|\alpha_tA\alpha_t^{-1}(e_{1})| = \infty$. Therefore $\lim_{t\rightarrow \infty}\|\alpha_tA\alpha_t^{-1}\| = \infty$.\end{proof}

For the remainder of this section we will work under the assumption that Proposition~\ref{origthm2} fails. That is 
\begin{assumption}
\label{assumption2}
Let $\Gamma$ be a discrete non-elementary subgroup of $SO^{+}(1,n)$ which contains no torsion elements of order greater than $k$. We assume that for all $\alpha \in O^{+}(1,n)$ there exists $A \in \alpha\Gamma\alpha^{-1}$, $A \neq I_{n+1}$, such that
$$
\|A -I_{n+1}\| < c_{k}.
$$\end{assumption}

Under this assumption, we will construct an infinite sequence $\{A_{i}\}$ of elements of $\Gamma$ and a diverging sequence $\{t(i)\}$ of positive real numbers, so that \[\|\alpha_{t(i)}A_{i}\alpha_{t(i)}^{-1} - I_{n+1}\| < c_{k}\,\,\,{\rm{and}}\,\,\,\|\alpha_{t(i)}A_{i+1}\alpha_{t(i)}^{-1} - I_{n+1}\| < c_{k}\mbox{  for all $i$}.\] We then use Martin and Friedland-Hersonsky's generalization of J{\o}rgensen's inequality to show that $\{\|\alpha_{t(i)} A_i\alpha_{t(i)}^{-1}\|\}$ is unbounded and obtain a contradiction.
\begin{definition}
\label{UA}
For  each $A \in \Gamma -\{I_{n+1}\}$, let $U_A : = \{t > 0 : \|\alpha_{t}A\alpha_{t}^{-1} - I_{n+1}\| < c_{k}\}$.\end{definition}

It is clear from Lemma~\ref{21} and Assumption~\ref{assumption2} that $\mathcal{U} = \{U_{A}\}_{A\in\Gamma - \{I_{n+1}\}}$ forms an open cover of the positive real line by bounded sets.




\begin{lemma}
\label{23}
There exist a sequence $t(i) \rightarrow \infty$ and a sequence $U_{A_i}\in \mathcal{U}$ such that $t(i) \in U_{i} \bigcap U_{i+1}$.\end{lemma}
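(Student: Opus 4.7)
The plan is to build the sequences $\{A_i\}$ and $\{t(i)\}$ inductively, exploiting the fact (established just above the statement) that $\mathcal{U}$ is an open cover of $(0,\infty)$ by bounded sets. I would pick any $A_1$ with $U_{A_1}$ nonempty, set $s_i := \sup U_{A_i}$ at each stage (finite by boundedness), choose $A_{i+1}$ so that $s_i \in U_{A_{i+1}}$ (possible since $\mathcal{U}$ covers), and then use openness of $U_{A_{i+1}}$ together with the definition of $s_i$ as a supremum to pick $t(i) \in U_{A_i} \cap U_{A_{i+1}}$ with $t(i) > s_i - 1/i$.

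Since $s_i \in U_{A_{i+1}}$ and $U_{A_{i+1}}$ is open, I get $s_i < s_{i+1}$, so the sequence of suprema is strictly increasing. Because $t(i) > s_i - 1/i$, to prove $t(i) \to \infty$ it suffices to show $s_i \to \infty$.

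The hard part will be ruling out a bounded limit for the $s_i$, since a careless inductive choice of $A_{i+1}$ could leave the sequence trapped below some finite value. I would address this by making the choice greedy: define $M_i := \sup\{\sup U_A : A \in \Gamma - \{I_{n+1}\},\ s_i \in U_A\}$ and require that the chosen $A_{i+1}$ satisfy $\sup U_{A_{i+1}} > \min(M_i, i) - 1/i$ (such a choice exists by the definition of $M_i$). Now suppose toward a contradiction that $s_i \to s < \infty$. Since $\mathcal{U}$ covers $(0,\infty)$, there exists $\beta \in \Gamma - \{I_{n+1}\}$ with $s \in U_\beta$, and openness of $U_\beta$ yields $\delta > 0$ with $(s-\delta,\, s+\delta) \subset U_\beta$. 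For all sufficiently large $i$ one then has $s_i \in U_\beta$, so $M_i \geq \sup U_\beta > s$; the greedy bound forces $s_{i+1} > \sup U_\beta - 1/i$, which eventually strictly exceeds $s$ and contradicts $s_j \leq s$ for all $j$. Hence $s_i \to \infty$, and the estimate $t(i) > s_i - 1/i$ produces the required divergent sequence.
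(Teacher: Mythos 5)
Your argument is correct, but it takes a genuinely different route from the paper's. The paper works one unit interval at a time: it restricts the cover to $[m,m+1]$ and runs the standard chain-connectedness argument (the relation ``joined by a finite chain of cover elements with consecutive members intersecting'' is an equivalence relation with open classes, so connectedness of $[m,m+1]$ forces a single class and hence a finite chain from $m$ to $m+1$); concatenating these chains over $m=1,2,3,\dots$ yields the sequences, and $t(i)\to\infty$ is automatic because the $m$-th block of the chain lives inside $[m,m+1]$. You instead build the chain globally on $(0,\infty)$ by chasing suprema: $s_i=\sup U_{A_i}$ lies outside the open bounded set $U_{A_i}$ but is approached from below by it and is covered by some $U_{A_{i+1}}$, whose openness then supplies a point of $U_{A_i}\cap U_{A_{i+1}}$ close to $s_i$. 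The one real danger in this route --- that the increasing sequence $s_i$ could stall at a finite accumulation point if the $A_{i+1}$ are chosen carelessly --- is genuine (one can exhibit nested families of intervals where a lazy choice gets trapped), and your greedy requirement $\sup U_{A_{i+1}}>\min(M_i,i)-1/i$ correctly excludes it: a finite limit $s$ would lie in some open $U_\beta$ with $\sup U_\beta>s$, and for large $i$ the greedy bound pushes $s_{i+1}$ past $s$, a contradiction. Both proofs are elementary and complete; the paper's buys divergence for free from its interval-by-interval structure, while yours trades the equivalence-relation machinery for the extra bookkeeping needed to force divergence.
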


\begin{proof} For each $A \in \Gamma -\{I_{n+1}\}$, let 
$\mathcal{U}_{[1,2]}:= \{U_{A}\bigcap[1,2]\}$. Then, $\mathcal{U}_{[1,2]}$ is an open cover of $[1,2]$. For $x,y \in [1,2]$, we say $x \sim y$ if there exists $r,s\in \mathbb{Z}^{+}$ and a sequence $\{U^{1}_{A_i}\}^{s}_{i = r} \subset \mathcal{U}_{[1,2]}$ such that $x\in U^{1}_{A_r}$, $y\in U^{1}_{A_s}$ and $U^{1}_{A_i}\bigcap U^{1}_{A_i+1}\neq \emptyset$ for all $i$, $r \leq i \leq s-1$.
             
 Reflexivity, symmetry and transitivity are immediate from the fact that $\mathcal{U}_{[1,2]}$ is an open cover. Therefore $\sim$ is an equivalence relation on $[1,2]$. Now, let $E \subset [1,2]$ be an equivalence class and let $x \in E$. There exists an open set $U^{1}_{A_i} \in \mathcal{U}_{[1,2]}$ such that $x \in U^{1}_{A_i}$. By definition of our equivalence relation, if $y \in U^{1}_{A_i}$ then $y \in E$. Therefore we have that $U^{1}_{A_i} \subset E$. Thus $E$ is an open set.  Hence, $[1,2]$ can be divided into disjoint, open equivalence classes. Since $[1,2]$ is connected, there is only one equivalence class.
 
 Since $1 \sim  2$ there exists an $m_{1} \in \mathbb{Z}^{+}$ and a sequence $\{U^{1}_{A_i}\}^{m_{1}}_{i = 1} \subset \mathcal{U}_{[1,2]}$ such that $1 \in U^{1}_{A_1}$, $2 \in U^{1}_{A_{m_{1}}}$ and $U^{1}_{A_i}\bigcap U^{1}_{A_{i+1}}\neq \emptyset$ for all $i$, $1 \leq i \leq m_{1} -1$.  Define $t(1) := 1$, $t(m_1) := 2$ and select $t(i)$ from $U^{1}_{A_{i-1}}\bigcap U^{1}_{A_i}$ for $2 \leq i \leq m_{1} -1$. 
 
 
 Now consider $\mathcal{U}_{[2,3]} = \{U^{2}_{A}\}$, where $U^{2}_{A} := U_{A}\bigcap[2,3]$.  By repeating the program above, we can define $t(i)$ for $m_{1}+1 \leq i \leq m_{2}$, where $m_{2}$ is an integer larger than $m_{1}$ and $t(m_{2}) = 3$. We can then define the corresponding $U_{A_i}$ for $m_{1}+1 \leq i \leq m_{2}$.  
 
 Continuing in this way, we define the required sequences.\end{proof}
 


The next lemma will be key in what follows.

\begin{lemma} 
\label{keylemma}$\|\alpha_{t(i)}A_{i}\alpha_{t(i)}^{-1} - I_{n+1}\| < c_{k}\,\,\,{\rm{and}}\,\,\,\|\alpha_{t(i)}A_{i+1}\alpha_{t(i)}^{-1} - I_{n+1}\| < 
c_{k}\mbox{  for all $i$}
$\end{lemma}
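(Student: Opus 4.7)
The plan is simply to unpack definitions: this lemma is essentially a direct translation of the set-membership statements delivered by Lemma~\ref{23} through the defining inequality recorded in Definition~\ref{UA}. No new estimates are required.

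First, I would recall the output of Lemma~\ref{23}: there exist a diverging sequence $\{t(i)\}$ of positive reals and a sequence $\{A_i\}\subset\Gamma-\{I_{n+1}\}$ such that
\[
t(i)\in U_{A_i}\cap U_{A_{i+1}} \qquad \text{for every }i.
\]
(The construction gives a chain of overlapping open sets $U^{j}_{A_\ell}\subset\mathcal{U}_{[j,j+1]}$ covering $[j,j+1]$, and the $t(i)$ were chosen precisely in the pairwise intersections; the indexing is arranged so that each $t(i)$ sits in the intersection of two consecutive members of the chain.)

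Next, I would invoke Definition~\ref{UA}, which states that for any $A\in\Gamma-\{I_{n+1}\}$,
\[
t\in U_A \iff \|\alpha_t A\alpha_t^{-1}-I_{n+1}\|<c_k.
\]
Applying this equivalence to the two memberships $t(i)\in U_{A_i}$ and $t(i)\in U_{A_{i+1}}$ furnished by Lemma~\ref{23} immediately yields the two inequalities
\[
\|\alpha_{t(i)}A_i\alpha_{t(i)}^{-1}-I_{n+1}\|<c_k
\quad\text{and}\quad
\|\alpha_{t(i)}A_{i+1}\alpha_{t(i)}^{-1}-I_{n+1}\|<c_k,
\]
which is exactly the conclusion of the lemma.

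Because the lemma is a direct consequence of the construction, there is no genuine obstacle. All of the real work was front-loaded into Lemma~\ref{23}, where the equivalence-class argument on $[1,2],[2,3],\dots$ produced the desired chain structure. The present lemma is stated separately only because these two inequalities are the input that the subsequent application of the generalized J\o{}rgensen inequality will use to force the contradiction completing the proof of Proposition~\ref{origthm2}.
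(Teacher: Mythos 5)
Your proposal is correct and matches the paper's own proof, which likewise observes that $t(i)\in U_{A_i}\cap U_{A_{i+1}}$ from Lemma~\ref{23} and then unwinds Definition~\ref{UA}. No further comment is needed.
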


\begin{proof}As $t(i) \in U_{A_{i}}\bigcap U_{A_{i+1}}$ the lemma follows immediately from Definition~\ref{UA}.\end{proof}


Next, we prove that the set $\{\|\alpha_{t(i)}A_{i}\alpha_{t(i)}^{-1}\|\}$ is unbounded. This will be shown to contradict Lemma~\ref{keylemma} and thus establish Proposition~\ref{origthm2}. The following is a special case of Theorem 2.11 in \cite{FriHer} and Theorem 4.5 in \cite{Mar2}.

\begin{theorem}
\label{Her}
Let $\Gamma \subset O^{+}(1,n)$ be a discrete group. Let $\tau$ be the unique positive solution of the cubic equation $2\tau(1 + \tau)^{2} = 1$. If $A,B \in \Gamma$ such that $\langle A, B \rangle$ is a discrete group and 
$$
\|A -I_{n+1}\| < \tau \,\,\,\,,\,\,\,\,\|B - I_{n+1}\| < \tau
$$
then $\langle A, B \rangle$ is a nilpotent group.\end{theorem}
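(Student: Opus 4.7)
The plan is to adapt J\o{}rgensen's classical argument to higher dimensions via an iterated commutator scheme. The underlying observation is that the commutator $[A,B] = ABA^{-1}B^{-1}$ lies quadratically closer to $I_{n+1}$ than $A$ or $B$ individually, so iterating the commutator produces a sequence of elements of $\langle A, B \rangle$ accumulating at the identity, which discreteness rules out unless these elements are eventually trivial. The constant $\tau$ is calibrated precisely so that the resulting recursion is contracting.

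The first step is to establish the key estimate. Writing $a := A - I_{n+1}$ and $b := B - I_{n+1}$, one checks directly that $AB - BA = ab - ba$, so
\[[A,B] - I_{n+1} \;=\; (AB-BA)A^{-1}B^{-1} \;=\; (ab-ba)A^{-1}B^{-1}.\]
Using $\norm{A^{-1}} = \norm{A} \leq 1 + \norm{a}$ (a fact about $O^{+}(1,n)$ already invoked in the paper) and likewise for $B$, we obtain
\[\norm{[A,B] - I_{n+1}} \;\leq\; 2\norm{a}\norm{b}(1+\norm{a})(1+\norm{b}).\]
Now set $A_{0} := A$ and $A_{k+1} := [A_{k}, B]$. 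Applying the bound inductively under the hypothesis $\norm{a},\norm{b} < \tau$ yields
\[\norm{A_{k+1} - I_{n+1}} \;\leq\; \lambda\,\norm{A_{k} - I_{n+1}},\]
where $\lambda := 2\norm{b}(1+\norm{a})(1+\norm{b}) < 2\tau(1+\tau)^{2} = 1$. Hence $A_{k} \to I_{n+1}$ geometrically. Since every $A_{k}$ lies in the discrete group $\langle A, B \rangle$, the sequence must be eventually constant, and $A_{k_{0}} = I_{n+1}$ for some $k_{0}$. This is the vanishing of the iterated commutator $[\dots[[A,B],B],\dots,B]$ with $k_{0}$ copies of $B$.

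To finish, one runs the same contraction argument simultaneously over every iterated commutator pattern in $\{A, B\}$: each such iterate stays in the $\tau$-ball by the same estimate, converges to $I_{n+1}$, and hence must be trivial for all sufficiently long patterns. This forces the lower central series of $\langle A, B \rangle$ to terminate, so the group is nilpotent. The main obstacle lies precisely in this final bookkeeping step: a single Engel-type relation $A_{k_{0}} = I_{n+1}$ does not formally imply the termination of the full lower central series, and one must either verify that the contraction holds uniformly across all commutator patterns, or invoke the structure theory of discrete subgroups of $O^{+}(1,n)$ to deduce that a discrete group close to the identity satisfying an Engel condition must already be nilpotent.
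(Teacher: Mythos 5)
First, a point of comparison: the paper does not prove Theorem~\ref{Her} at all --- it is quoted as a special case of Theorem 2.11 of Friedland--Hersonsky and Theorem 4.5 of Martin --- so there is no internal proof to measure you against. Your sketch is a reasonable reconstruction of the Zassenhaus/J{\o}rgensen commutator argument underlying those results, and its quantitative core is correct: the identity $[A,B]-I_{n+1}=(ab-ba)A^{-1}B^{-1}$, the estimate $\norm{[A,B]-I_{n+1}}\le 2\norm{a}\norm{b}(1+\norm{a})(1+\norm{b})$ (using $\norm{A^{-1}}=\norm{A}\le 1+\norm{a}$, valid in $O^{+}(1,n)$), and the recognition that $2\tau(1+\tau)^{2}=1$ is exactly the threshold at which the iteration contracts. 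This correctly identifies where the constant $\tau$ comes from.

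Nevertheless the argument as written does not reach the stated conclusion, and the gap you flag at the end is genuine, not bookkeeping. Iterating only $A_{k+1}=[A_{k},B]$ produces the single left-normed pattern $[A,B,B,\dots,B]$, hence only an Engel-type relation, which does not by itself imply nilpotency of $\langle A,B\rangle$. Two further steps are required. (i) The contraction must be run over \emph{all} left-normed commutators of weight $m$ with entries in $\{A,B\}$ (all interleavings of $A$'s and $B$'s), showing each lies within $\lambda^{m-1}\max(\norm{a},\norm{b})$ of $I_{n+1}$; your estimate does extend to this, but one must check the contraction factor is uniform over patterns, and the appeal to discreteness should be made precise --- what is used is that $I_{n+1}$ is isolated in $\langle A,B\rangle$ (equivalently, that the $\tau$-ball meets the discrete group in a finite set), so that all patterns of some fixed weight $N$ are trivial. (ii) One must then invoke, or prove, the group-theoretic fact that a group generated by a set $X$ in which every left-normed commutator of weight $N$ with entries in $X$ is trivial is nilpotent of class less than $N$; this is standard but is exactly the non-tautological point you worry about when you observe that stabilization of the lower central series does not follow formally from a single vanishing commutator. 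With (i) and (ii) supplied, your route does recover the theorem and is essentially the argument of the cited sources; without them, the proof is incomplete.
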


\begin{remark}
\label{tauc}
We note here that
\[ \begin{array}{lcl}
\tau &>& 0.2971 \\
&>& 2e^{-2}\\
 &\geq& 2\sin^{2}(\frac{\pi}{k})e^{-2}\mbox{ for all $k$}\\
 &=&  c_{k}. \end{array} 
\]\end{remark}

We now prove the following claim.

\begin{claim}
\label{claim2}
The set $\{\|\alpha_{t(i)} A_i\alpha_{t(i)}^{-1}\|\}$ is unbounded.\end{claim}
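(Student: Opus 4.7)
I would argue the claim by contradiction: assume $\{\|M_i\|\}$ is bounded, where $M_i := \alpha_{t(i)} A_i \alpha_{t(i)}^{-1}$, and derive a contradiction using Theorem~\ref{Her}. By Lemma~\ref{keylemma} combined with Remark~\ref{tauc}, both $\|M_i - I_{n+1}\|$ and $\|\alpha_{t(i)} A_{i+1} \alpha_{t(i)}^{-1} - I_{n+1}\|$ are strictly less than $\tau$. Since $\alpha_{t(i)} \Gamma \alpha_{t(i)}^{-1}$ is a discrete subgroup of $O^{+}(1,n)$, Theorem~\ref{Her} yields that $\langle M_i, \alpha_{t(i)} A_{i+1} \alpha_{t(i)}^{-1}\rangle$ is nilpotent. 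Since discrete nilpotent subgroups of $O^{+}(1,n)$ are elementary, conjugating back shows that $\langle A_i, A_{i+1}\rangle$ is elementary for every $i$.

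The easiest case is dispatched first: if some $A \in \Gamma \setminus \{I_{n+1}\}$ appears as $A_i$ for infinitely many indices, then along that subsequence Lemma~\ref{21} (together with $t(i) \to \infty$) gives $\|M_i\| \to \infty$, already contradicting boundedness. So we may assume $\{A_i\}$ runs through infinitely many distinct elements. Now the elementary pairs chain together: because $A_{i+1}$ lies in both $\langle A_i, A_{i+1}\rangle$ and $\langle A_{i+1}, A_{i+2}\rangle$, which are both elementary, their invariant sets on $\overline{\mathbb{H}^n}$ must be compatible through $A_{i+1}$, and an induction produces a single elementary subgroup $\Gamma_0 := \langle A_1, A_2, \ldots\rangle$ of $\Gamma$ with finite invariant set $F \subset \overline{\mathbb{H}^n}$. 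By the normalization made at the beginning of this section, neither $N$ nor $S$ belongs to $F$.

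The final contradiction comes from a near-fixed-point argument. From $\|M_i - I_{n+1}\| \geq |M_i e_1 - e_1|$ together with $\|M_i - I_{n+1}\| < c_k$ we read off that $A_i$ nearly fixes $\alpha_{t(i)}^{-1}(e_1)$, and this point converges to $S$ as $t(i) \to \infty$. Since each $A_i$ lies in the elementary group $\Gamma_0$, its fixed-point structure must be compatible with $F$, so its near-fixed points lie in a neighborhood of $F$ that is bounded away from $S$. This is impossible once $i$ is large, giving the desired contradiction. I expect the main obstacle to be the chaining step in paragraph two: one must carefully verify that the pairwise elementary conclusion of Theorem~\ref{Her} propagates to a globally coherent elementary structure on $\Gamma_0$, a point that is most delicate when $A_{i+1}$ is elliptic, in which case its fixed subspace has positive dimension and the elementary structure is not uniquely determined by $A_{i+1}$ alone. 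Discreteness of $\Gamma$ together with the bounded torsion hypothesis should suffice to control this.
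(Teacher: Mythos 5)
Your overall strategy matches the paper's: use Lemma~\ref{keylemma}, Remark~\ref{tauc} and Theorem~\ref{Her} to conclude that each $\langle A_i,A_{i+1}\rangle$ is elementary, then chain these pairs into a single elementary structure and derive unboundedness from the fact that the relevant fixed points avoid $N$ and $S$. But there is a genuine gap at exactly the point you flag as the ``main obstacle'' and then defer: the elliptic case. You never resolve it, and ``discreteness together with the bounded torsion hypothesis should suffice'' is not an argument. The paper's resolution is short but essential, and it is the reason Proposition~\ref{origthm1} was proved in the first place: if some $A_i$ were elliptic, it would have order at most $k$ by hypothesis on $\Gamma$, hence so would its conjugate $\alpha_{t(i)}A_i\alpha_{t(i)}^{-1}$, and Proposition~\ref{origthm1} would force $\|\alpha_{t(i)}A_i\alpha_{t(i)}^{-1}-I_{n+1}\|\geq c_k$, contradicting $t(i)\in U_{A_i}$. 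So no $A_i$ is elliptic at all; consecutive pairs are then both parabolic with a common fixed point or both hyperbolic with a common axis, and since a parabolic has a unique fixed point and a hyperbolic a unique axis, the chaining is immediate. Without this exclusion your induction in paragraph two does not close, precisely because an elliptic link in the chain does not determine the elementary structure.

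Your final step also diverges from the paper and is under-justified as written. The paper extracts a subsequence via the convergence-group property (Theorem~3.7 of Gehring--Martin) to get $A_{i_j}\to y_0$ uniformly on compacta away from $x_0$, with $x_0,y_0\notin\{N,S\}$, and then concludes $\alpha_{t(i_j)}A_{i_j}\alpha_{t(i_j)}^{-1}(\vec{0})\to N$ directly. Your substitute --- that the ``near-fixed points'' of each $A_i$ lie in a neighborhood of $F$ bounded away from $S$ --- is plausible but requires a uniform lower bound on translation lengths (hyperbolic case) or translation parts (parabolic case) over the infinitely many distinct $A_i$; the size of the region moved by less than a fixed amount depends on these quantities. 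Such bounds do follow from discreteness of the elementary group $\Gamma_0$, but you would need to state and prove them; as it stands the step is a sketch, not a proof. Note also that the claim in the paper is proved directly (a subsequence of norms tends to infinity), not by assuming boundedness; your contradiction framing is harmless but unnecessary.
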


\begin{proof} Since $\alpha_{t(i)}A_{i}\alpha_{t(i)}^{-1}$ and $\alpha_{t(i)}A_{i+1}\alpha_{t(i)}^{-1}$ are elements of the discrete group $\alpha_{t(i)}\Gamma\alpha_{t(i)}^{-1}$, $\langle \alpha_{t(i)}A_{i}\alpha_{t(i)}^{-1},\alpha_{t(i)}A_{i+1}\alpha_{t(i)}^{-1} \rangle$ is a discrete group. By Lemma~\ref{keylemma}, Theorem~\ref{Her} and Remark~\ref{tauc} $\langle \alpha_{t(i)}A_{i}\alpha_{t(i)}^{-1},\alpha_{t(i)}A_{i+1}\alpha_{t(i)}^{-1} \rangle$ is nilpotent and thus elementary. Therefore $\langle A_{i},A_{i+1}\rangle$ is discrete and elementary. 

By Assumption~\ref{assumption2}, if $A_{i}$ is elliptic it has order at most $k$. This implies $$\|\alpha_{t(i)} A_i\alpha_{t(i)}^{-1} - I_{n+1}\| \geq c_{k}$$ by Proposition~\ref{origthm1}. However that directly contradicts the definition of $U_{A_i}$. We conclude that no element of $\{A_{i}\}$ is elliptic. So for all $i$, $A_{i}$ and $A_{i+1}$ are either both parabolic sharing a common fixed point or both hyperbolic sharing a common axis. Therefore, either each $A_{i}$ is hyperbolic or each $A_{i}$ is parabolic and the $A_ i$ all have a common fixed point set.

Let $\Delta$ be the subgroup of $\Gamma$ generated by all $A_{i}$. The group $\Gamma$, and therefore $\Delta$, acts as a discrete group of M\"obius transformation of $\mathbb{B}^{n}$, which is, itself, a subgroup of the group of M\"obius transformations  of $\bar{\mathbb{R}}^{n}$. Thus $\Delta$ is a \textit{convergence group} \cite{GeM}.   

The set $\{A_{i}\}$ is an infinite sequence in $\Delta$, since each $U_A$ is bounded and $t_i\rightarrow\infty$, so by Theorem~3.7 in \cite{GeM} there exists a subsequence of $\{A_{i}\}$, call it $\{A_{i_{j}}\}$, and points $x_{0}$, $y_{0}$ $\in \bar{\mathbb{R}}^{n}$ such that
$$
\lim_{j\rightarrow \infty}A_{i_{j}} = y_{0}\,\,\,{\rm{and}}\,\,\,\lim_{j\rightarrow \infty}A_{i_{j}}^{-1} = x_{0}
$$
uniformly on compact subsets in $\bar{\mathbb{R}}^{n}\setminus\{x_{0}\}$ and $ \bar{\mathbb{R}}^{n}\setminus\{y_{0}\}$, respectively.

Since $x_{0}$ and $y_{0}$ are accumulation points of any orbit in $\mathbb{B}^{n}$, they are elements of the limit set of $\Delta$, which is contained in $\mathbb{S}^{n-1}$. Since $\Delta$ is a discrete elementary group, $x_{0}$ and $y_{0}$ are fixed points of elements in $\Delta$. 

Now $\{x_{0},y_{0}\} \cap \{N,S\} = \emptyset$ since $N$ and $S$ are not fixed points, while $x_{0}$ and $y_{0}$ are. Therefore, since $\lim_{j\rightarrow \infty}\alpha_{t(i_{j})}^{-1}(\vec{0}) = S$, $S \neq x_{0}$ and $\lim_{j \rightarrow \infty}A_{i_{j}} = y_{0}$ uniformly on compact subsets of $\overline{\mathbb{B}^{n}}\setminus\{x_{0}\}$, we have that $\lim_{j\rightarrow \infty}A_{i_{j}}\alpha_{t(i_{j})}^{-1}(\vec{0}) = y_{0}$. Hence, $\lim_{j\rightarrow\infty}\alpha_{t(i_{j})}A_{i_{j}}\alpha_{t(i_{j})}^{-1}(\vec{0}) = N$. Thus, transferring from the ball model to the hyperboloid model, we have \[\lim_{j\rightarrow \infty}|\alpha_{t(i_{j})}A_{i_{j}}\alpha_{t(i_{j})}^{-1}(e_{1})| = \infty.\] Therefore $$\lim_{j\rightarrow \infty}\|\alpha_{t(i_{j})}A_{i_{j}}\alpha_{t(i_{j})}^{-1}\| = \infty.$$\end{proof}


\emph{Proof Proposition~\ref{origthm2}:} To complete the proof  of Proposition~\ref{origthm2} we observe that by Lemma~\ref{keylemma}

\begin{eqnarray*} 
\left|\|\alpha_{t(i)}A_{i}\alpha_{t(i)}^{-1}\| - 1\right| & = & \left|\|\alpha_{t(i)}A_{i}\alpha_{t(i)}^{-1}\| - \|I_{n+1}\|\right|\\
 & \leq & \|\alpha_{t(i)}A_{i}\alpha_{t(i)}^{-1} - I_{n+1}\|\\ & < &c_{k} < 1
  \end{eqnarray*}
  
  which implies that 
  $$
  \|\alpha_{t(i)}A_{i}\alpha_{t(i)}^{-1}\| < 2
  $$
  
 which contradicts Claim~\ref{claim2}.$\square$
 
\section{Proof of Main Result}
\label{OrigThm3}

In this section we prove several technical lemmas. These lemmas will be used show that if the conclusion of Proposition~\ref{origthm2} holds we can establish an upper bound on the number of elements of $\Gamma$ that fail to move a ball of radius $r$ off itself. This result will allow us to bound the volume of the image of such a ball in the orbit space, thereby  establishing Theorem~\ref{MainTheorem}.


 
 First , we see that a bound on the translation distance of a hyperbolic isometry implies a bound on the entries of the associated matrix.
 
 \begin{lemma}
 \label{31}
 Given $r > 0$, if $A \in O^{+}(1,n)$ is such that $A\left(\overline{B(e_1,r)}\right) \bigcap \overline{B(e_1,r)} \neq \emptyset$, then 
  $$ 
 |a_{ij}| \leq \left(\frac{1 + \cosh r}{\sinh r}\right)\sqrt{\cosh 6r} = \kappa(r)\mbox{   for all $i,j$.}
$$\end{lemma}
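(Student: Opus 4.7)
The plan is to convert the geometric hypothesis into hyperbolic distance bounds on carefully chosen test points of $\mathbb{H}^n$, and then into Euclidean norm bounds on the columns of the matrix $A$. The key elementary identity I will use is that for any $q \in \mathbb{H}^n$ in the hyperboloid model, $|q|^2 = q_1^2 + q_2^2 + \cdots + q_{n+1}^2 = 2q_1^2 - 1 = \cosh\bigl(2 d_{\mathbb{H}}(e_1, q)\bigr)$, where $|\cdot|$ denotes the Euclidean norm on $\mathbb{R}^{n+1}$ and $q_1 = \cosh d_{\mathbb{H}}(e_1, q)$ is the first coordinate. This identity relates the Euclidean norms of hyperboloid points to their hyperbolic distance from $e_1$.

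First, I would use the hypothesis to select $x \in \overline{B(e_1,r)}$ with $A(x) \in \overline{B(e_1, r)}$. Since $A$ is an isometry, the triangle inequality yields
\[d_{\mathbb{H}}(e_1, A(e_1)) \leq d_{\mathbb{H}}(e_1, A(x)) + d_{\mathbb{H}}(A(x), A(e_1)) = d_{\mathbb{H}}(e_1, A(x)) + d_{\mathbb{H}}(x, e_1) \leq 2r,\]
so the identity above gives $|A(e_1)| \leq \sqrt{\cosh 4r}$.

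Next, for each $j \geq 2$ I would introduce the auxiliary test point $p_j := \cosh r \cdot e_1 + \sinh r \cdot e_j$; a direct check using the formula $\cosh d_{\mathbb{H}}(e_1, p_j) = (p_j)_1 = \cosh r$ shows $p_j \in \mathbb{H}^n$ and $d_{\mathbb{H}}(e_1, p_j) = r$. The same triangle inequality then gives $d_{\mathbb{H}}(e_1, A(p_j)) \leq d_{\mathbb{H}}(e_1, A(e_1)) + d_{\mathbb{H}}(e_1, p_j) \leq 3r$, hence $|A(p_j)| \leq \sqrt{\cosh 6r}$. Linearity of $A$ yields $A(e_j) = (A(p_j) - \cosh r \cdot A(e_1))/\sinh r$, so the Euclidean triangle inequality gives
\[|A(e_j)| \leq \frac{|A(p_j)| + \cosh r \cdot |A(e_1)|}{\sinh r} \leq \frac{\sqrt{\cosh 6r} + \cosh r \cdot \sqrt{\cosh 4r}}{\sinh r} \leq \frac{(1 + \cosh r)\sqrt{\cosh 6r}}{\sinh r} = \kappa(r),\]
using $\cosh 4r \leq \cosh 6r$ in the last step. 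Since $|a_{ij}| \leq |A(e_j)|$ for each $i$, this handles all columns with $j \geq 2$; the column $j=1$ is already covered by $|A(e_1)| \leq \sqrt{\cosh 4r} \leq \kappa(r)$.

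There is no serious obstacle here. The main design choices are the use of the norm identity $|q|^2 = \cosh\bigl(2 d_{\mathbb{H}}(e_1, q)\bigr)$, the selection of the test point $p_j$ at hyperbolic distance exactly $r$ along the $e_j$ direction (so the triangle inequality produces $3r$, giving rise to the $\cosh 6r$ appearing in $\kappa(r)$), and the linear recovery of $e_j$ from $p_j$ and $e_1$ (which accounts for the denominator $\sinh r$ and the factor $1+\cosh r$).
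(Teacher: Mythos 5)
Your proof is correct and follows essentially the same route as the paper: both arguments decompose $e_j$ as a linear combination of $e_1$ and the test point $p_j=(\cosh r)e_1+(\sinh r)e_j\in\overline{B(e_1,r)}$, use the triangle inequality to place the images inside $\overline{B(e_1,3r)}$, and convert the distance bound into the Euclidean norm bound $\sqrt{\cosh 6r}$ via the identity $|q|^2=\cosh\bigl(2d_{\mathbb{H}}(e_1,q)\bigr)$. The only (immaterial) difference is that you track the sharper bound $|A(e_1)|\le\sqrt{\cosh 4r}$ before relaxing it to $\sqrt{\cosh 6r}$, whereas the paper bounds both images by $\sqrt{\cosh 6r}$ at once.
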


\begin{proof} For all $j \neq1$ and for all $i$
\begin{eqnarray*}
|a_{ij}| &\leq& |Ae_{j}|\\ &=& |A(\frac{\cosh r}{\sinh r}e_{1}+e_{j}-\frac{\cosh r}{\sinh r}e_{1})|\\ &\leq& \frac{1}{\sinh r}|A((\cosh r)e_{1} + (\sinh r)e_{j})| + \frac{\cosh r}{\sinh r}|Ae_{1}|
\end{eqnarray*}
The assumption that $A(\overline{B(e_1,r)}) \bigcap \overline{B(e_1,r)} \neq \emptyset$, implies that $A(\overline{B(e_1,r)}) \subset \overline{B(e_1,3r)}$. Now, let $ v=(\cosh r)e_{1} + (\sinh r)e_{j}$. Then $\cosh d_{\mathbb{H}}(v,e_{1}) = -v\circ e_{1} = \cosh r$. Since $r$ and $d_{\mathbb{H}}(v,e_{1})$ are non-negative we have that $d_{\mathbb{H}}(v,e_{1})=r$. Therefore $v\in\overline{B(e_1,r)}$. Clearly $e_{1} \in \overline{B(e_1,r)}$. Furthermore, as any element of $\overline{B(e_1,3r)}$ has euclidean length at most $\sqrt{\sinh^{2} 3r + \cosh^{2} 3r}$, we have:

$$
|a_{ij}| \leq \left( \frac{1}{\sinh r} + \frac{\cosh r}{\sinh r}\right) \sqrt{\sinh^{2} 3r + \cosh^{2} 3r} = \left(\frac{1 + \cosh r}{\sinh r}\right)\sqrt{\cosh 6r}
$$

for all $i,j \neq 1$. For $j =1$:
\begin{eqnarray*}
|a_{i1}| &\leq& |Ae_{1}|\\ &\leq& \sqrt{\sinh^{2} 3r + \cosh^{2} 3r}\\ &=& \sqrt{\cosh 6r}
\end{eqnarray*}

Since $\frac{1+ \cosh r}{\sinh r} > 1$, the result follows.\end{proof}

A bound on the entries of a matrix $A$ implies a bound on its operator norm.

\begin{lemma}
\label{32} Let $A$ be any $(n+1)\times(n+1)$ matrix. Given $d>0$, if $|a_{ij}| \leq d$ for all i and j, then 

$$
\|A\| \leq d(n + 1)
$$\end{lemma}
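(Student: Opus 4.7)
The plan is to bound the Euclidean norm of $Av$ for an arbitrary unit vector $v \in \mathbb{R}^{n+1}$, since by definition $\|A\| = \sup_{|v|=1} |Av|$. Writing $(Av)_i = \sum_{j=1}^{n+1} a_{ij} v_j$, I would apply the Cauchy-Schwarz inequality to each coordinate, which yields
\[
\left|\sum_{j=1}^{n+1} a_{ij} v_j\right|^2 \leq \left(\sum_{j=1}^{n+1} a_{ij}^2\right)\left(\sum_{j=1}^{n+1} v_j^2\right) = \sum_{j=1}^{n+1} a_{ij}^2,
\]
using $|v|=1$ in the last step. Summing over $i$ gives $|Av|^2 \leq \sum_{i,j} a_{ij}^2$, which by the hypothesis $|a_{ij}| \leq d$ is at most $(n+1)^2 d^2$.

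Taking square roots yields $|Av| \leq d(n+1)$ for every unit vector $v$, and hence $\|A\| \leq d(n+1)$ as claimed.

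This argument is essentially a bound of the operator norm by the Frobenius norm, and there is no real obstacle — the estimate is slightly wasteful (one could get $d\sqrt{n+1}$ by bounding $\|A\|$ in terms of the maximum row norm, which is still $\leq d\sqrt{n+1}$, but the cruder $d(n+1)$ bound suffices and may be what is needed downstream). If one wanted to avoid Cauchy-Schwarz, an alternative route is to use the identity $\|A\| = \sqrt{r_\sigma(A^t A)}$ stated in Section~\ref{123}, combined with the elementary fact that the spectral radius is bounded by any induced matrix norm such as the maximum absolute row sum $\|A\|_\infty \leq d(n+1)$; this would even give the sharper bound $\|A\|\leq d(n+1)$ directly from $\|A\|\leq \sqrt{\|A\|_1\|A\|_\infty}$. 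Either path is short, so I would take the Cauchy-Schwarz route for simplicity.
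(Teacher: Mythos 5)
Your proof is correct and is essentially identical to the paper's: the paper also fixes a unit vector $v$, applies Cauchy--Schwarz row by row in the form $|A_i\cdot v|\le |A_i|\le d\sqrt{n+1}$, and sums the squares over the $n+1$ rows to get $|Av|\le d(n+1)$. One caveat on your aside: the claim that the estimate is wasteful and could be improved to $d\sqrt{n+1}$ by bounding $\|A\|$ by the maximum row norm is false --- the operator norm dominates, rather than being dominated by, the maximum row norm, and the all-ones matrix with $d=1$ has $\|A\|=n+1$, so the bound $d(n+1)$ is sharp for this hypothesis (your alternative route via $\|A\|\le\sqrt{\|A\|_1\|A\|_\infty}$ is fine and also gives exactly $d(n+1)$).
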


\begin{proof} Let $v$ be a vector of unit length. Let $A_{i}$ denote the $i^{th}$ row  of the matrix $A$. Then

\begin{eqnarray*}
|A_{i} \cdot v| &\leq& |A_{i}||v|\\ &=& |A_{i}| \\ &=& \sqrt{\sum_{j=1}^{n+1} (a_{ij})^{2}} \\ &\leq& \sqrt{\sum_{j=1}^{n+1} d^{2}} \\&=& d\sqrt{(n+1)}
\end{eqnarray*}

Hence

\begin{eqnarray*}
|Av| &=& |(A_{1}\cdot v, \cdots ,A_{n+1}\cdot v)| \\ &=& \sqrt{(A_{1} \cdot v)^{2}+ \cdots+ (A_{n+1} \cdot v)^{2}}\\ &\leq& \sqrt{(n+1)(n+1)d^{2}}\\ &=& d(n+1)
\end{eqnarray*}

Since $v$ was chosen arbitrarily, by our definition of the operator norm $$\|A\| \leq d(n+1).$$\end{proof}

Given two matrices $A$ and $B$, a bound on the distance between corresponding entries gives a bound on $\|AB^{-1}-I_{n+1}\|$.

\begin{lemma}
\label{33} Given $K,L > 0$, let $\delta : = \frac{L}{(n+1)^{2}K}$. For $A,B \in O^{+}(1,n)$, if $|b_{ij}| \leq K$ and $|a_{ij} - b_{ij}| < \delta$ for all $i,j$, then $$ \|AB^{-1} - I_{n+1}\| \leq L$$\end{lemma}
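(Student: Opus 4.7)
The plan is to reduce the bound on $\|AB^{-1} - I_{n+1}\|$ to two applications of Lemma~\ref{32}, via the identity
\[AB^{-1} - I_{n+1} = (A-B)B^{-1}.\]
By submultiplicativity of the operator norm, this gives
\[\|AB^{-1} - I_{n+1}\| \leq \|A - B\|\,\|B^{-1}\|.\]

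First I would estimate $\|A - B\|$. The hypothesis $|a_{ij} - b_{ij}| < \delta$ means every entry of the matrix $A - B$ has absolute value less than $\delta$, so Lemma~\ref{32} applied to $A-B$ yields $\|A-B\| \leq \delta(n+1)$.

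Next I would estimate $\|B^{-1}\|$. Since $B \in O^+(1,n)$, we have the identity $\|B^{-1}\| = \|B\|$ (already used in the proof of Proposition~\ref{conE(n)result1}). The hypothesis $|b_{ij}| \leq K$ together with Lemma~\ref{32} applied to $B$ gives $\|B\| \leq K(n+1)$, hence $\|B^{-1}\| \leq K(n+1)$.

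Combining these two bounds,
\[\|AB^{-1} - I_{n+1}\| \leq \delta(n+1)\cdot K(n+1) = \delta(n+1)^2 K = L,\]
by the definition $\delta = L/\bigl((n+1)^2 K\bigr)$. There is no real obstacle here: the entire content is algebraic manipulation plus the earlier Lemma~\ref{32}; the only subtlety is recognizing that $\|B^{-1}\| = \|B\|$ for $B \in O^+(1,n)$, which lets us apply the entrywise bound on $B$ rather than having to control the entries of $B^{-1}$ directly.
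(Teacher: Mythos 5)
Your proof is correct and follows essentially the same route as the paper: the decomposition $AB^{-1}-I_{n+1}=(A-B)B^{-1}$, submultiplicativity, the identity $\|B^{-1}\|=\|B\|$ for $B\in O^{+}(1,n)$, and two applications of Lemma~\ref{32}. No differences worth noting.
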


\begin{proof}
\begin{eqnarray*}
\|AB^{-1} - I_{n+1}\| &=& \|(A - B)B^{-1}\|\\ &\leq& \|(A - B)\| \|B^{-1}\| \\ &=&\|(A - B)\| \|B\| \end{eqnarray*}

Therefore, by Lemma~\ref{32} $$\|AB^{-1} - I_{n+1}\| \leq (n+1)\delta(n+1)K \leq (n+1)^{2}\delta K$$\end{proof}

The following lemma bounds the size of a bounded uniformly discrete subset of $\mathbb{R}^{p}$.
\begin{lemma}
\label{34}
 Let $p \in \mathbb{Z}^{+}$ and $q,s \in \mathbb{R}^{+}$ be given. Let $M \subset \mathbb{R}^{p}$ be such that:
\begin{eqnarray*}
               &(i)& (a_i) \in M \mbox{ implies that } |a_i| \leq q \mbox{ for all } i\\
               \mbox{and} &(ii)& (a_i),(b_i) \in M \mbox{ implies that } |a_{i} - b_{i}| > s \mbox{ for some } i
               \end{eqnarray*}
Then
$$
|M| \leq \left(\frac{2q}{s} + 1\right)^{p}
$$\end{lemma}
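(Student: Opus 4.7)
The plan is a standard pigeonhole / box-counting argument. Condition (i) says $M$ sits inside the cube $[-q,q]^p$. I would cover this cube by a grid of axis-aligned boxes of side length $s$. In each coordinate direction, partitioning the interval $[-q,q]$ (of length $2q$) into consecutive half-open subintervals of length $s$ requires at most $\lceil 2q/s\rceil$ pieces, and since $\lceil x\rceil \leq x+1$ for every real $x$, this is bounded by $2q/s + 1$. Taking Cartesian products produces a cover of $[-q,q]^p$ by at most $(2q/s + 1)^p$ boxes.

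The key observation is then: two distinct points of $M$ cannot lie in the same box. Indeed, if $(a_i)$ and $(b_i)$ both sit in a common box, then for every coordinate $i$ they lie in a common subinterval of length $s$, so $|a_i - b_i| < s$ for all $i$, directly contradicting condition (ii), which demands $|a_i - b_i| > s$ for at least one $i$. Consequently the assignment sending each point of $M$ to the box containing it is injective, and
\[
|M| \;\leq\; (\text{number of boxes}) \;\leq\; \left(\tfrac{2q}{s} + 1\right)^{p}.
\]

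There is no real obstacle here; this is an elementary combinatorial-geometry count. The only mild point of care is the strict-versus-non-strict inequality in (ii): because the hypothesis requires $|a_i - b_i| > s$ (not $\geq s$), using half-open boxes of side exactly $s$ is perfectly sufficient, since two points in such a box automatically satisfy $|a_i - b_i| < s$ in every coordinate.
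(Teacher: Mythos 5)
Your proof is correct and follows essentially the same route as the paper: partition $[-q,q]$ into at most $2q/s+1$ subintervals of length $s$, form the product grid of boxes, and observe that condition (ii) forbids two distinct points of $M$ from sharing a box, so the box count bounds $|M|$. Nothing further is needed.
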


\begin{proof} We divide the interval $[-q,q]$ as follows

\begin{eqnarray*}
E_{1} &=& [-q,-q+s)\\
E_{2} &=& [-q+s,-q+2s)\\
&\vdots&\\
E_{[\frac{2q}{s}]+1} &=& \left[-q+\left[\frac{2q}{s}\right]s,q\right]
\end{eqnarray*}

Now each $(a_i) \in M$ is an element of a $p$-cylinder $E_{a} = E_{j1} \times E_{j2} \times \cdots \times E_{jp}$. Where $(a_i) \in E_{ji}$, for all $i$. We note that by condition (ii) above, if $(a_i) \in E_{a} \mbox{ and } (a_i)\neq(b_i) \mbox{ then } (b_i) \not\in E_{a}.$

Hence we need only count the number of possible cylinders. As there are a maximum of $\frac{2q}{s}+1$ choices for $p$ different positions, our conclusion follows.\end{proof}

We now use our series of lemmas to bound the set of elements of a discrete group of hyperbolic isometries that fail to move a ball of radius $r$ of of itself. 

\begin{definition}
Define $\mathcal{H}(r,\Gamma) := \{ A \in \Gamma: A(\overline{B(e_1,r)}) \bigcap \overline{B(e_1,r)}\neq \emptyset \}.$
\end{definition}

\begin{lemma}
\label{35}
 Let $\Gamma$ be a subgroup of $O^{+}(1,n)$ such that $\|A - I_{n+1}\| \geq c_{k}$ for all $A \in \Gamma - I_{n+1}$. For $r >0$
$$
|\mathcal{H}(r,\Gamma)| \leq \left(\frac{2\kappa(r)^{2}(n+1)^{2}}{c_{k}} + 1\right)^{(n+1)^{2}}
$$ \end{lemma}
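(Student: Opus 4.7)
The plan is to encode each $A \in \mathcal{H}(r,\Gamma)$ by its $(n+1)^2$ matrix entries, thereby viewing $\mathcal{H}(r,\Gamma)$ as a subset $M$ of $\mathbb{R}^{(n+1)^2}$, and then apply the counting Lemma~\ref{34} with parameters
\[
p = (n+1)^2,\qquad q = \kappa(r),\qquad s = \frac{c_k}{(n+1)^2\,\kappa(r)}.
\]
Plugging these into the formula $(2q/s+1)^p$ gives exactly the claimed bound, so the task reduces to verifying the two hypotheses of Lemma~\ref{34}.

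Condition (i), the uniform bound on entries, is immediate from Lemma~\ref{31}: any $A \in \mathcal{H}(r,\Gamma)$ satisfies $|a_{ij}| \leq \kappa(r) = q$ for all $i,j$. For condition (ii), the uniform discreteness, I would take two distinct elements $A,B \in \mathcal{H}(r,\Gamma)$ and observe that $AB^{-1}$ is a non-identity element of $\Gamma$, so the hypothesis of the lemma forces
\[
\|AB^{-1} - I_{n+1}\| \geq c_k.
\]
Then the contrapositive of Lemma~\ref{33}, applied with $K = \kappa(r)$ and $L = c_k$ (yielding precisely $\delta = s$), produces indices $i,j$ with $|a_{ij} - b_{ij}| \geq \delta = s$, which is exactly condition (ii).

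Finally, invoking Lemma~\ref{34} gives
\[
|\mathcal{H}(r,\Gamma)| \;\leq\; \left(\frac{2q}{s}+1\right)^{p} \;=\; \left(\frac{2\kappa(r)^2(n+1)^2}{c_k}+1\right)^{(n+1)^2},
\]
as desired.

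I do not expect any real obstacle here; the statement is essentially a bookkeeping assembly of the three preparatory lemmas. The only minor subtlety is reconciling the strict inequality $|a_{ij}-b_{ij}|<\delta$ in the hypothesis of Lemma~\ref{33} with the strict inequality $|a_{ij}-b_{ij}|>s$ in Lemma~\ref{34}; this is purely cosmetic and can be handled either by working with $s - \varepsilon$ and letting $\varepsilon\to 0$, or by noting that the partition of $[-q,q]$ into half-open intervals in the proof of Lemma~\ref{34} accommodates the non-strict form without loss.
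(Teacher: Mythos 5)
Your proposal is correct and follows essentially the same route as the paper: entries bounded by $\kappa(r)$ via Lemma~\ref{31}, separation of entries via the contrapositive of Lemma~\ref{33} applied to $AB^{-1}$, and then the counting Lemma~\ref{34} with $p=(n+1)^2$, $q=\kappa(r)$, $s=c_k/((n+1)^2\kappa(r))$. The strict-versus-non-strict inequality point you flag is real but harmless, and the paper itself passes over it silently.
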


\begin{proof} Let $r > 0$ be given. Let $A$ and $B$ be distinct elements of $\mathcal{H}(r,\Gamma)$. Then $A$ and $B$ are distinct elements of $\Gamma$ and, therefore, $AB^{-1}$ is a non-identity element of $\Gamma$. So by assumption, we have:
$$
\|AB^{-1} - I_{n+1}\| \geq c_{k}
$$
 By Lemma~\ref{31}, $|b_{ij}| \leq \kappa(r)$ for all $i,j$. Thus by Lemma~\ref{33}, there exists $\hat{i},\hat{j}$, such that 

$$
|a_{\hat{i}\hat{j}} - b_{\hat{i}\hat{j}}| > \frac{c_{k}}{(n+1)^{2}\kappa(r)}
$$

We now apply Lemma~\ref{34}, with $p = (n+1)^{2}$, $q    = \kappa(r)$ and $s = \frac{c_k}{(n+1)^{2}\kappa(r)}$. In this case:
$$
\left(\frac{2q}{s} + 1\right)^{p} = \left(\frac{2\kappa(r)^{2}(n+1)^{2}}{c_{k}} + 1\right)^{(n+1)^{2}}
$$
\end{proof}


We are now prepared to prove our main result, which for convenience is restated below.

\begin{theorem}[Main Theorem]
Let $\Gamma$ be a discrete group of orientation-preserving isometries of $\mathbb{H}^n$. Assume that $\Gamma$ has no torsion element of order greater than $k$. Then
\[\Vol(\mathbb{H}^n/\Gamma) \geq \mathcal{A}(n,k)\]
where $\mathcal{A}(n,k)$ is the constant depending only on $n$ and $k$ defined in the introduction.
\end{theorem}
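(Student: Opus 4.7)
The plan is to combine Proposition~\ref{origthm2} with the counting bound of Lemma~\ref{35} via a straightforward ball-packing argument in the quotient. First I would dispose of the elementary case: if $\Gamma$ is discrete and elementary, it is virtually abelian of rank at most $n-1$ (fixing a point in $\mathbb{H}^n$ or on the boundary), so $\mathbb{H}^n/\Gamma$ has infinite volume and the inequality is trivial. Hence I may assume $\Gamma$ is non-elementary and apply Proposition~\ref{origthm2} to obtain $\alpha\in O^{+}(1,n)$ such that $\Gamma':=\alpha\Gamma\alpha^{-1}$ satisfies $\|A-I_{n+1}\|\geq c_k$ for every non-identity $A\in\Gamma'$. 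Since conjugation by an isometry preserves the hyperbolic volume of the quotient, $\Vol(\mathbb{H}^n/\Gamma)=\Vol(\mathbb{H}^n/\Gamma')$.

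Next I would fix $r>0$ and consider the closed ball $B=\overline{B(e_1,r)}\subset\mathbb{H}^n$, with quotient map $\pi\colon\mathbb{H}^n\to\mathbb{H}^n/\Gamma'$. The central observation is that $\pi|_B$ is at most $|\mathcal{H}(r,\Gamma')|$-to-one: if $y,y'\in B$ satisfy $\pi(y)=\pi(y')$, then $y'=Ay$ for some $A\in\Gamma'$, and because $Ay\in B$ we have $A(B)\cap B\neq\emptyset$, so $A\in\mathcal{H}(r,\Gamma')$. Consequently
\[\Vol(\mathbb{H}^n/\Gamma')\;\geq\;\Vol(\pi(B))\;\geq\;\frac{\Vol(B)}{|\mathcal{H}(r,\Gamma')|}.\]

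I would then substitute the bound of Lemma~\ref{35}. Using $c_k=2\sin^{2}(\pi/k)e^{-2}$ and $\kappa(r)^2=(1+\cosh r)^{2}\cosh 6r/\sinh^{2}r$, the bookkeeping simplification
\[\frac{2\kappa(r)^{2}(n+1)^{2}}{c_{k}}+1\;=\;1+\left(\frac{e(n+1)(1+\cosh r)}{\sinh r}\right)^{2}\cosh 6r\,\sin^{-2}\!\left(\frac{\pi}{k}\right)\]
produces the bracketed factor in $\mathcal{A}(n,k)$, raised to the power $(n+1)^2$. Meanwhile, writing $B$ in geodesic polar coordinates gives the standard formula $\Vol(B)=\int_{0}^{r}n\pi^{n/2}/(n/2)!\;\sinh^{n-1}(u)\,du$ for the volume of a hyperbolic ball, which is the integral appearing in $\mathcal{A}(n,k)$. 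Dividing and taking the supremum over $r>0$ yields $\Vol(\mathbb{H}^n/\Gamma)\geq\mathcal{A}(n,k)$.

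The potential obstacle here is minor: essentially all of the analytic and group-theoretic work has already been packaged in Propositions~\ref{origthm1},~\ref{origthm2} and Lemma~\ref{35}. The only step requiring care is the multiplicity bound for $\pi|_B$, which relies on having chosen $B$ centered at the same basepoint $e_1$ used to define $\mathcal{H}(r,\Gamma')$; any geometric argument at a different basepoint would require re-conjugating. Everything else is algebraic manipulation to put the resulting lower bound in the advertised form.
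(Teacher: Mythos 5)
Your proposal is correct and follows essentially the same route as the paper: reduce to the non-elementary case, conjugate via Proposition~\ref{origthm2}, bound the multiplicity of $\pi$ on $\overline{B(e_1,r)}$ by $|\mathcal{H}(r,\Gamma')|$, and combine Lemma~\ref{35} with the hyperbolic ball volume formula before taking the supremum over $r$. The only difference is that you spell out the algebraic simplification identifying the bound with $\mathcal{A}(n,k)$, which the paper leaves implicit.
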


\begin{proof} Let $\Gamma$ be a discrete subgroup of $SO^{+}(1,n)$ which contains no torsion elements of order greater than $k$. If $\Gamma$ is an elementary group, its co-volume would be infinite.  Therefore, we may assume that $\Gamma$ is non-elementary. 
Hence by Proposition~\ref{origthm2}, there exists a group $\Gamma^{\prime}$, conjugate to $\Gamma$, such that $$\|A-I_{n+1}\| \geq c_{k} $$
for all $A \in \Gamma^{\prime}$,\,$A\neq I_{n+1}$.

Let $\pi$ be the covering projection from hyperbolic $n$-space onto $Q = \mathbb{H}^n/\Gamma^{\prime}$. For $r>0$, let $\mathcal{H} := \{ A \in \Gamma^{\prime}: A(\overline{B(e_1,r)}) \bigcap \overline{B(e_1,r)}\neq \emptyset \}.$

The map $\pi$ restricted to $B(e_1,r)$ is a local isometry away from the singular locus of $Q$. Notice that the singular locus has volume zero. If $x \in \pi(B(e_1,r))$ then $$|\pi^{-1}(x) \cap B(e_1,r)| \leq |\mathcal{H}|.$$
Therefore
\begin{eqnarray*}
\Vol\left(\frac{\mathbb{H}^{n}}{\Gamma}\right) &=& \Vol\left(\frac{\mathbb{H}^{n}}{\Gamma^{\prime}}\right) \\ &\geq& \Vol(\pi(B(e_1,r))\\&\geq& \frac{\Vol(B(e_1,r))}{|\mathcal{H}|}
\end{eqnarray*}

And our result follows from Lemma~\ref{35}. \end{proof}

\section{Corollaries}
Corollary~\ref{corollary 1} follows readily from Theorem~\ref{MainTheorem}. We recall that the quotient $Q$ of a hyperbolic manifold $M$ by its group of orientation-preserving isometries $G$ is an orientable hyperbolic orbifold (as long as $\pi_1(M)$ is not virtually abelian, in which case $\Vol(M)$ is infinite and Corollary~\ref{corollary 1} is vacuous). Note that \[\Vol(Q)=\frac{\Vol(M)}{|G|}.\] Since, under the assumptions of Corollary~\ref{corollary 1}, $\Vol(Q)\ge\mathcal{A}(n,k)$, we obtain our result.
\medskip

The Mostow-Prasad rigidity theorem (\cite{mostow},\cite{prasad}) implies that if $M$ has finite volume then one can identify the group of isometries of $M$ with $\Out(\pi_1(M))$. With this in mind, we can give the following more topological version of Corollary~\ref{corollary 1}.

\begin{corollary}
\label{corollary 2}
If $M$ is a finite volume orientable hyperbolic n-manifold and $G$ is a subgroup of $\Out(\pi_1(M))$ containing no torsion elements of order greater than $k$, then \[|G| \leq\frac{2\Vol(M)}{\mathcal{A}(n,k)}.\]
\end{corollary}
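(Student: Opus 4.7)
The plan is to leverage the Mostow--Prasad rigidity theorem, which the excerpt has already invoked, to transport the problem from the topological setting of $\Out(\pi_1(M))$ into the geometric setting of $\Isom(M)$, and then account for the possible presence of orientation-reversing elements with a factor of two.

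First, I would cite Mostow--Prasad (as in the remark preceding the corollary) to identify $\Out(\pi_1(M))$ with $\Isom(M)$, so that $G$ may be viewed as a finite subgroup of $\Isom(M)$ still satisfying the hypothesis that no torsion element has order greater than $k$. The assumption that $M$ has finite volume is essential here and is precisely what makes the rigidity theorem apply.

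Next, set $G^{+} := G \cap \Isom_{+}(M)$. Since $\Isom_{+}(M)$ has index at most $2$ in $\Isom(M)$, the subgroup $G^{+}$ has index at most $2$ in $G$, so $|G|\leq 2|G^{+}|$. The subgroup $G^{+}$ inherits the bound on torsion orders from $G$, and by construction consists of orientation-preserving isometries of $M$, so Corollary~\ref{corollary 1} applies directly and yields $|G^{+}|\leq \Vol(M)/\mathcal{A}(n,k)$.

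Combining these two inequalities produces the desired bound $|G|\leq 2\Vol(M)/\mathcal{A}(n,k)$. There is no real obstacle here, since all the substantive geometric work has been completed in the previous sections; the factor of $2$ in the statement precisely reflects the orientation bookkeeping, namely that elements of $G$ reversing orientation fall outside the hypothesis of Corollary~\ref{corollary 1} but comprise at most half of $G$, so doubling the earlier bound suffices.
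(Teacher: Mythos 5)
Your proposal is correct and is exactly the argument the paper intends: the remark preceding the corollary supplies the Mostow--Prasad identification of $\Isom(M)$ with $\Out(\pi_1(M))$, and the factor of $2$ accounts for the index of $\Isom_{+}(M)$ in $\Isom(M)$, with Corollary~\ref{corollary 1} applied to $G\cap\Isom_{+}(M)$. The paper leaves this proof implicit, but your write-up fills in precisely the intended steps with no gaps.
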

\bigskip

\nocite{*}
\bibliography{VolumesOne}
\end{document}